\numberwithin{equation}{section}
\theoremstyle{plain}
\newtheorem{theorem}[equation]{Theorem}
\newtheorem{lemma}[equation]{Lemma}
\newtheorem{proposition}[equation]{Proposition}
\newtheorem{corollary}[equation]{Corollary}
\theoremstyle{definition}
\newtheorem{remark}[equation]{Remark}
\newtheorem{definition}[equation]{Definition}
\newtheorem*{acknowledgements}{Acknowledgements}
\newcommand{\sss}{\ifmmode{{\mathfrak s}}\else{${\mathfrak s}$\ }\fi}
\newcommand{\sst}{\ifmmode{{\mathfrak t}}\else{${\mathfrak t}$\ }\fi}
\def\Z{\mathbb Z}
\def\Q{\mathbb Q}
\def\C{\mathbb C}
\def\mm{\mathbf{m}}
\def\kk{\mathbf{k}}
\def\wt#1{\widetilde{#1}}
\def\spinc{spin$^c$}
\title[Rational cuspidal curves in projective surfaces]{Rational cuspidal curves in projective surfaces. Topological versus algebraic obstructions}
\author{Maciej Borodzik}
\address{Institute of Mathematics, University of Warsaw, ul. Banacha 2,
02-097 Warsaw, Poland}
\address{Institute of Mathematics, Polish Academy of Sciences, ul. \'Sniadeckich 8, Warsaw, Poland}
\email{mcboro@mimuw.edu.pl}
\thanks{The first author was supported by  Polish OPUS grant No 2012/05/B/ST1/03195}
\date{\today}
\subjclass[2010]{primary: 14H45, secondary: 14H20, 57M25, 14J25} 
\keywords{}
\begin{document}
\begin{abstract}
We study rational cuspidal curves in  projective surfaces. We specify two criteria obstructing possible
configurations of singular points that may occur on such curves. 
One criterion  generalizes the result of Fernandez de Bobadilla, Luengo, Melle--Hernandez
and N\'emethi and is based on the B\'ezout theorem. The other one is a generalization of the result obtained by Livingston and the 
author and relies on Ozsv\'ath--Szab\'o inequalities for $d$-invariants in Heegaard Floer homology. 
We show by means of explicit calculations that the two approaches give very similar obstructions.
\end{abstract}
\maketitle

\section{Introduction}
Suppose $C$ is an algebraic curve in a smooth complex projective surface $X$. Throughout the paper we will always assume that $C$
is reduced and irreducible. A singular point $p\in C$ is called \emph{cuspidal} if $C$ is locally irreducible at $p$, that is,
if for a small ball $B\subset X$ with center $p$, the set $(B\setminus\{p\})\cap C$ is connected. A \emph{rational cuspidal curve}
in $X$ is a algebraic curve of genus $0$ all of whose singular points are cuspidal. Topologically, a rational cuspidal curve
is homeomorphic to a sphere $S^2$.

A fundamental question in the theory is: 
which configurations of singular points can occur on a single rational cuspidal curve? Recently the interest 
in the question has been renewed because of the break-through of Koras and Palka \cite{KP,Pal0,Pal}.
Most of the research, including the results of Koras and Palka,
has focused on the case $X=\C P^2$. In \cite{BM} Moe and the author studied rational cuspidal curves in Hirzebruch surfaces.

In this paper we turn our attention to rational cuspidal curves in projective surfaces in general. Note that there exist surfaces that do not 
contain
any rational curve, see for example \cite{Xu}, 
and it is conjectured that the a surface of general type has only finitely many rational curves
(a stronger version of this conjecture is known as the Green--Griffiths--Lang conjecture, see \cite{Voi}),
not to mention rational cuspidal curves. Constructing rational cuspidal curves on general surfaces is actually much more
challenging than obstructing their existence.

In the present article we give two obstructions, yet the most interesting aspect is probably the fact that two completely different methods,
one algebraic the other one topological,
give almost the same restrictions for possible configurations of singular points on rational cuspidal curves.
The main difference in the two versions are
the assumptions: the topological version requires that the geometric genus of the surface is zero, and works only for curves with a positive 
self-intersection.
However, unlike the algebraic version, it does not restrict to algebraic curves, but after a slight reformulation
the topological proof works even for curves in an almost complex four-manifold
with $b_2^+=1$; see Theorem~\ref{thm:maintop2}.

Despite of the differences of the assumptions, the similarity between Theorem~\ref{thm:mainalg} and Theorem~\ref{thm:maintop} is too strong to be
only a coincidence. A natural question, whether there is a theory unifying both the topological and algebraic approach, remains unanswered.

To state the main theorem(s) of the paper we introduce some notation (most of the terms are explained in Sections~\ref{sec:algnut} and \ref{sec:topnut}). 
Let $C$ be a cuspidal curve in a smooth projective surface $X$.
Let $z_1,\ldots,z_n$ be its singular points and $S_1,\ldots,S_n$ the corresponding semigroups. For any $m\in\Z$, $m\ge 0$ set
\begin{equation}\label{eq:defofR}
R(m)=\min_{\substack{m_1+\ldots+m_n=m\\m_i\ge 0}} \sum_{i=1}^n\# S_i\cap[0,m_i).
\end{equation}

The algebraic version of the main theorem is the following.
\begin{theorem}[Main Theorem, algebraic version]\label{thm:mainalg}
Suppose $D$ is a divisor on $X$ such that $C\cdot D\ge 0$. Then, if no section of $\mathcal{O}_X(D)$ vanishes
entirely on $C$
\[R(C\cdot D+1)\ge h^0(D).\]
\end{theorem}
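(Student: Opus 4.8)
The plan is to pass to the normalization $\nu\colon \C P^1 \to C \subset X$ and study the restriction of sections of $\mathcal{O}_X(D)$ to $C$. Write $d = C\cdot D \ge 0$ and set $L = \nu^*\mathcal{O}_X(D)$, a line bundle of degree $d$ on $\C P^1$, so that $h^0(\C P^1, L) = d+1$. Pulling back defines a linear map $\rho\colon H^0(X,\mathcal{O}_X(D)) \to H^0(\C P^1, L)$, $s \mapsto \nu^* s$. The hypothesis that no (nonzero) section of $\mathcal{O}_X(D)$ vanishes entirely on $C$ says precisely that $\rho$ is injective, so if $W := \rho\bigl(H^0(X,\mathcal{O}_X(D))\bigr)$ denotes the image then $\dim W = h^0(D)$. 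Thus the theorem reduces to proving $\dim W \le R(d+1)$.

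Let $p_i = \nu^{-1}(z_i) \in \C P^1$. The key local observation is that for any $s$ not vanishing on $C$ the order of vanishing $\ord_{p_i}(\nu^* s)$ lies in the semigroup $S_i$: after trivializing $\mathcal{O}_X(D)$ near $z_i$, the section $s$ becomes a local function $g \in \mathcal{O}_{X,z_i}$ and $\ord_{p_i}(\nu^*s) = \ord_t(g\circ\phi_i)$, where $\phi_i$ is a local parametrization of the branch; by definition of the semigroup this order belongs to $S_i$. This is exactly where cuspidality (local irreducibility) enters: a single branch yields one well-defined contact order, which is constrained to $S_i$.

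Next I would run a Weierstrass-gap style filtration argument at each $p_i$ separately. Fixing $i$ and setting $W^{(i)}_j = \{w \in W : \ord_{p_i}(w) \ge j\}$, the leading-coefficient (jet) map identifies $W^{(i)}_j/W^{(i)}_{j+1}$ with a subspace of the one-dimensional jet space $L\otimes \mathfrak{m}_{p_i}^{\,j}/\mathfrak{m}_{p_i}^{\,j+1}$, so $\dim W^{(i)}_j/W^{(i)}_{j+1} \le 1$, and equality forces the existence of an element of $W$ of order exactly $j$, whence $j \in S_i$ by the previous paragraph. Summing the successive quotients gives, for every $m_i \ge 0$,
\[
\dim\bigl(W/W^{(i)}_{m_i}\bigr) \le \#\bigl(S_i\cap[0,m_i)\bigr).
\]

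Finally I would combine the $n$ filtrations. Choose any tuple $(m_1,\dots,m_n)$ with $m_i \ge 0$ and $\sum_i m_i = d+1$. Any $w \in \bigcap_i W^{(i)}_{m_i}$ vanishes to total order at least $d+1$ on $\C P^1$, which exceeds $\deg L = d$, forcing $w = 0$; hence $\bigcap_i W^{(i)}_{m_i} = 0$. The diagonal map therefore embeds $W \hookrightarrow \bigoplus_i W/W^{(i)}_{m_i}$, so
\[
\dim W \le \sum_{i=1}^n \dim\bigl(W/W^{(i)}_{m_i}\bigr) \le \sum_{i=1}^n \#\bigl(S_i\cap[0,m_i)\bigr).
\]
Minimizing the right-hand side over all admissible tuples yields $\dim W \le R(d+1) = R(C\cdot D+1)$, which is the claim. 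The only genuinely delicate point is the semigroup membership of the vanishing orders in the second step; once that is established, the remainder is linear algebra together with the degree bound on $\C P^1$ (the total vanishing order of a section cannot exceed $\deg L$), which is the Bézout-type input underlying the whole argument.
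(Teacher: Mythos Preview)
Your argument is correct and is essentially the paper's proof, just phrased on the normalization rather than upstairs on $X$: your subspaces $W^{(i)}_{m_i}$ are the images under the injective map $\rho$ of the paper's $H_{\mathbf m}$, your Weierstrass-gap step is exactly the content of the codimension lemma the paper quotes (Lemma~\ref{lem:codim}), and your degree bound ``total vanishing $>d$ forces $w=0$'' is the B\'ezout contradiction the paper invokes. One small point: you write the normalization as $\C P^1$, but you never actually use this---the only global input is that a nonzero section of a degree-$d$ line bundle on a smooth projective curve has at most $d$ zeros, which holds for any curve. Since the paper explicitly remarks (Remark~\ref{rem:aftertop}(a)) that the algebraic version does \emph{not} require $C$ to be rational, you should replace $\C P^1$ by the normalization $\widetilde C$ and drop the (unused) computation $h^0(\C P^1,L)=d+1$.
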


The topological counterpart gives an almost the same estimate under different assumptions.
\begin{theorem}[Main Theorem, topological version]\label{thm:maintop}
Assume that $C$ is rational.
Suppose $D$ is a divisor on $X$ such that $C\cdot D+1\in[0,2g]$, where $g$ is the genus of the connected sum of links of singularities. 
If $p_g(X)=0$, $C^2>0$ and $K\cdot C\le 1$, then
\begin{equation}\label{eq:onRtop}
R(C\cdot D+1)\ge \chi(D)+\frac12 b_1(X).
\end{equation}
\end{theorem}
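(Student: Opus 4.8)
The plan is to realize the inequality as an Ozsv\'ath--Szab\'o $d$-invariant obstruction coming from a negative-definite filling of the boundary of a neighborhood of $C$. Write $L=K_1\#\cdots\#K_n$ for the connected sum of the links of the singular points $z_1,\ldots,z_n$; each $K_i$ is an algebraic knot, so $L$ is a knot in $S^3$ of Seifert genus $\sum_i\delta_i$, which equals the genus $g$ of the connected sum of links appearing in the statement. The first step is to identify the boundary $Y=\partial\nu(C)$ of a regular neighborhood of $C$ in $X$. Since $C$ is topologically a sphere carrying the singularities with links $K_i$, and its normal Euler number is $C^2=N>0$, a Kirby-calculus computation of the kind used in the work of Livingston and the author gives $Y\cong S^3_N(L)$, surgery on $L$ with framing $N$; as $N>0$ this is a rational homology sphere with $H_1\cong\Z/N$.

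Next I would exploit the hypothesis $p_g(X)=0$, equivalently $b_2^+(X)=1$. Decompose $X=\nu(C)\cup_Y W$ with $W=X\setminus\operatorname{int}\nu(C)$. Because $C^2>0$, the class $[C]$ already spans the positive part of the intersection form, so $W$ is negative semidefinite; I would check that $b_2^+(W)=0$, that $b_2^-(W)=b_2(X)-1$, and that $b_1(W)$ records $b_1(X)$. With $\partial W=-S^3_N(L)$ a rational homology sphere and $W$ negative definite, the Ozsv\'ath--Szab\'o inequality applies: for each \spinc structure $\mathfrak{s}$ on $W$,
\[ c_1(\mathfrak{s})^2+b_2^-(W)\le 4\,d(Y,\mathfrak{s}|_Y)+2b_1(W), \]
in the form valid for $b_1>0$. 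A divisor $D$ with $C\cdot D+1=m$ singles out the \spinc structure with $c_1(\mathfrak{s})=2D-K$, whose square contributes $(2D-K)^2$; this is where the Euler characteristic $\chi(D)=\chi(\mathcal{O}_X)+\tfrac12 D\cdot(D-K)$, and hence the right-hand side of the theorem, will enter via Riemann--Roch.

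The third step is to compute $d(S^3_N(L),\mathfrak{s}|_Y)$ from the semigroups. By the rational surgery formula, these $d$-invariants differ from those of the unknot surgery $S^3_N(U)$ by correction terms $-2V_j(L)$ coming from knot Floer homology, and for an algebraic knot the $V_j$ are explicit functions of the semigroup counting function. The infimal convolution built into the definition of $R$ in (\ref{eq:defofR}) is precisely the passage from the individual semigroups $S_i$ to the connected sum $L$, so the relevant $d$-invariant is governed by $R(m)$ with $m=C\cdot D+1$; the range hypothesis $C\cdot D+1\in[0,2g]$ guarantees that we stay in the interval where this identification is valid. Substituting and simplifying should reduce the displayed inequality to exactly $R(C\cdot D+1)\ge\chi(D)+\tfrac12 b_1(X)$, with the term $\tfrac12 b_1(X)$ emerging to cancel the irregularity contribution $-\tfrac12 b_1(X)$ hidden in $\chi(\mathcal{O}_X)$.

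I expect the principal difficulty to lie in the bookkeeping forced by $b_1(X)\neq 0$. When $W$ has $b_1>0$ the clean negative-definite inequality must be replaced by its refinement, which requires controlling how many \spinc structures on $W$ restrict to a fixed one on $Y$ and tracking the associated grading shifts; the condition $K\cdot C\le 1$ should be what keeps the extremal \spinc structure in the correct region, so that the semigroup formula for the $V_j$, rather than a weaker bound, applies. Verifying the identification $Y\cong S^3_N(L)$ and the negative-definiteness of $W$ carefully, and matching the topological $c_1(\mathfrak{s})^2$ with the algebraic $D\cdot(D-K)$, are the places where sign and convention errors are easiest to make.
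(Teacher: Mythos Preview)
Your outline is essentially the paper's proof: identify $\partial\nu(C)$ with $S^3_{C^2}(L)$, show $W=X\setminus\nu(C)$ is negative definite via $p_g(X)=0$, apply the Ozsv\'ath--Szab\'o inequality with the large-surgery $d$-invariant formula, and simplify through Hirzebruch signature and Riemann--Roch. Two of your anticipated difficulties are misplaced, however.

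First, the hypothesis $K\cdot C\le 1$ is not about keeping a \spinc{} structure in some region; it is exactly what guarantees the \emph{large surgery} condition $q=C^2>2g(L)$, since $2g(L)=C\cdot(C+K)-2$ by adjunction. Without it the surgery formula expressing $d$ in terms of $R$ simply does not apply in the clean form you need. The range hypothesis $C\cdot D+1\in[0,2g]$ is a separate matter, ensuring the relevant \spinc{} label $m$ lies in $[-q/2,q/2)$.

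Second, there is no refined $b_1>0$ version of the $d$-invariant inequality to invoke. The paper uses the inequality in the form $d(-Y,\sss)\ge\tfrac14(c_1^2(\sst)-3\sigma(W)-2\chi(W))$ directly; all the $b_1$-information is already inside $\chi(W)$ and $\sigma(W)$. After gluing $c_1^2(\sst_m)+c_1^2(\sst_D)=(K+2D)^2$ and using $\sigma(W)=\sigma(X)-1$, $\chi(W)=\chi(X)-2$, Hirzebruch gives $R(C\cdot D+1)\ge\tfrac12 D\cdot(D-K)+1$, and only then does Riemann--Roch convert the right-hand side to $\chi(D)+q(X)=\chi(D)+\tfrac12 b_1(X)$. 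So the $\tfrac12 b_1(X)$ arises purely from $\chi(\mathcal{O}_X)=1-q(X)$, not from any modification of the Heegaard Floer input.
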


\begin{remark}\label{rem:aftertop}\
\begin{itemize}
\item[(a)] The algebraic version \emph{does not} assume that $C$ is rational. 
In the topological version, if $C$ has geometric genus $p_g(C)>0$, then an analogue of Theorem~\ref{thm:maintop} might be proved
using methods of \cite{BCG,BHL}, but in general we will obtain weaker inequalities.
\item[(b)] 
In Theorem~\ref{thm:maintop2} below, we will show a variant of Theorem~\ref{thm:maintop} working for smooth curves with cuspidal singularities
in four-manifolds admitting an almost complex structure.
\item[(c)] Theorem~\ref{thm:maintop} is formulated in the language of divisors, but in fact one can assume that $D$ is just a homology class
in $H_2(X;\Z)$. Indeed, as we assume that $p_g(X)=0$, we have $H^2(X;\C)=H^{1,1}(X)$ 
and by \cite[Theorem IV.2.13]{BH} each homology class in $H_2(X;\Z)$ can
be actually represented by a divisor. The only moment in the proof of Theorem~\ref{thm:maintop}, where algebraic geometry is used, is
replacing the expression $\frac12 D\cdot(D-K)+1$ by $\chi(D)+\frac12 b_1(X)$ in the last step of the proof.
\end{itemize}
\end{remark}

Theorems~\ref{thm:mainalg} and \ref{thm:maintop} are direct generalization of the main result of \cite{BM} for Hirzebruch surfaces.
For $X=\C P^2$ the inequality $R(C\cdot D+1)\ge h^0(D)$ turns out to be an equality; see \cite{BL}. Insofar only topological tools are
strong enough to prove the equality for the case of $\C P^2$. There are examples in \cite{BM} that the inequality is strict in the case
of Hirzebruch surfaces.
to prove q
\section{The algebraic proof}
\subsection{Projective surfaces in a nutshell}\label{sec:algnut}
For the reader's convenience we review some notions from algebraic geometry of surfaces. Good resources are \cite{BH,GH} and
\cite{GS} for more topological approach. We stress that in \cite{BH} the authors deal with complex surfaces in general, while
we restrict to projective surfaces. Our formulae might be less general than in \cite{BH}.

Let $X$ be a smooth projective surface. A \emph{divisor} $D$ on $X$ is a finite linear combination of algebraic curves on $X$
with coefficients in $\Z$. A divisor gives an element in $H_2(X;\Z)$, which we still denote by $D$. By Poincar\'e duality $D$ defines
also a class in $H^2(X;\Z)$.
Given the Hodge decomposition $H^2(X;\C)=H^{0,2}(X)\oplus H^{1,1}(X)\oplus H^{2,0}(X)$, the class of a divisor $D$ is
in $H^{1,1}(X)\cap H^2(X;\Z)$. Moreover, each element in that group can be represented by a divisor.

A divisor $D$ determines an algebraic line bundle $\mathcal{L}_D$. We have $c_1(\mathcal{L}_D)=D\in H^2(X;\Z)$.
Conversely, any algebraic line bundle is of the 
form $\mathcal{L}_D$ for some divisor $D$.
In particular, take a line bundle $\Lambda^2T^*X$, where the cotangent bundle $T^*X$ is regarded as a complex bundle of rank~2. 
The divisor $K$ satisfying $\mathcal{L}_K=\Lambda^2 T^*X$
is called the \emph{canonical divisor}. In $H^2(X;\Z)$, we have the relation $K=-c_1(X)$.

The sheaf of sections of $\mathcal{L}_D$ is denoted by $\mathcal{O}(D)$. 
Its \v{C}ech cohomology groups are $H^i(X,\mathcal{O}(D))$, $i=0,1,\ldots$. We set  $h^i(D)=\dim H^i(X,\mathcal{O}(D))$ and $\chi(D)=\sum (-1)^i h^i(D)$.
For dimensional reasons, $h^i(D)=0$ for $i>2$.
 
The following result can be found in \cite[Theorem I.5.3]{BH}.
\begin{theorem}[Serre duality]\label{thm:serre}
For any divisor $D$ we have $h^i(D)=h^{2-i}(K-D)$.
\end{theorem}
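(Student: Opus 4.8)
The plan is to realize Serre duality as the statement that there is a perfect $\C$-bilinear pairing
\[
H^i(X,\mathcal{O}(D))\times H^{2-i}(X,\mathcal{O}(K-D))\longrightarrow \C,
\]
from which $h^i(D)=h^{2-i}(K-D)$ follows at once, since the pairing is between finite-dimensional spaces. As $X$ is projective, hence Kähler, the cleanest route is the Dolbeault/Hodge-theoretic one. First I would invoke the Dolbeault isomorphism, identifying $H^q(X,\mathcal{O}(D))$ with the $\bar\partial$-cohomology $H^{0,q}_{\bar\partial}(X,\mathcal{L}_D)$ of smooth $(0,q)$-forms valued in $\mathcal{L}_D$, which I denote $A^{0,q}(X,\mathcal{L}_D)$, and likewise $H^q(X,\mathcal{O}(K-D))$ with $H^{0,q}_{\bar\partial}(X,\mathcal{L}_{K-D})$. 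This replaces the abstract \v{C}ech groups by spaces of forms on which a pairing can be written down explicitly.

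Next I would construct the pairing. Because $\mathcal{L}_K=\Lambda^2T^*X=\Omega^2_X$, we have $\mathcal{L}_{K-D}\cong\Omega^2_X\otimes\mathcal{L}_D^{-1}$, so a $(0,2-i)$-form valued in $\mathcal{L}_{K-D}$ is the same datum as a $(2,2-i)$-form valued in $\mathcal{L}_D^{-1}$. Given $\alpha\in A^{0,i}(X,\mathcal{L}_D)$ and $\eta\in A^{2,2-i}(X,\mathcal{L}_D^{-1})$, I would wedge the form parts and contract the bundle factors via the tautological pairing $\mathcal{L}_D\otimes\mathcal{L}_D^{-1}\to\mathcal{O}_X$, producing a $(2,2)$-form, i.e. a top-degree form on the real four-manifold underlying $X$, and set
\[
\langle\alpha,\eta\rangle=\int_X\alpha\wedge\eta.
\]
Stokes' theorem, together with the identity $\bar\partial(\alpha\wedge\eta)=\bar\partial\alpha\wedge\eta\pm\alpha\wedge\bar\partial\eta$ and the vanishing of $(3,\cdot)$-forms on a surface, shows that the integral depends only on the $\bar\partial$-cohomology classes of $\alpha$ and $\eta$, so the pairing descends to cohomology.

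The substantive step is to prove that this pairing is nondegenerate. Here I would fix a Hermitian metric on $X$ and on $\mathcal{L}_D$, form the $\bar\partial$-Laplacian $\Delta_{\bar\partial}$, and apply the Hodge theorem: each Dolbeault class has a unique $\Delta_{\bar\partial}$-harmonic representative and the harmonic spaces $\mathcal{H}^{0,q}(X,\mathcal{L}_D)$ are finite dimensional. The conjugate-linear Hodge star $\bar\ast$, twisted by the bundle metric so as to send $\mathcal{L}_D$-valued forms to $\mathcal{L}_D^{-1}$-valued forms, then maps $\mathcal{H}^{0,i}(X,\mathcal{L}_D)$ isomorphically onto $\mathcal{H}^{0,2-i}(X,\mathcal{L}_{K-D})$, and under this identification $\langle\alpha,\bar\ast\alpha\rangle$ equals, up to a positive constant, the $L^2$-norm $\|\alpha\|^2$. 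Nonvanishing of the norm on nonzero classes forces the pairing to be perfect, and comparing the dimensions of the two harmonic spaces yields the desired equality.

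The main obstacle is the analytic input underlying the Hodge theorem — the ellipticity of $\Delta_{\bar\partial}$, the finite dimensionality of its kernel, and the orthogonal decomposition $A^{0,q}=\mathcal{H}^{0,q}\oplus\operatorname{im}\bar\partial\oplus\operatorname{im}\bar\partial^{\ast}$ — which rests on the theory of elliptic operators on compact manifolds rather than the formal manipulations above; one must also verify that $\bar\ast$ intertwines $\Delta_{\bar\partial}$ on $\mathcal{L}_D$ with $\Delta_{\bar\partial}$ on $\mathcal{L}_{K-D}$, so that it preserves harmonicity. An alternative, purely algebraic route (Hartshorne, Chapter III) bypasses the analysis by constructing a dualizing sheaf $\omega_X$ through a functorial $\operatorname{Ext}$ characterization, establishing an isomorphism $H^i(X,\mathcal{F})\cong\operatorname{Ext}^{2-i}(\mathcal{F},\omega_X)^{\ast}$ for coherent $\mathcal{F}$, and finally identifying $\omega_X$ with $\Omega^2_X=\mathcal{L}_K$ for smooth $X$; the price there is the homological machinery of the dualizing complex.
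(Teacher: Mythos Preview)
Your sketch is a correct outline of the standard Dolbeault/Hodge-theoretic proof of Serre duality, and the points you flag as requiring care (ellipticity of $\Delta_{\bar\partial}$, the Hodge decomposition, and compatibility of $\bar\ast$ with the Laplacians) are exactly the right ones. Note, however, that the paper does not prove Theorem~\ref{thm:serre} at all: it is quoted as a known result with a reference to \cite[Theorem~I.5.3]{BH}, so there is no in-paper argument to compare against. Your approach is in fact essentially the one taken in the cited reference, which treats Serre duality for compact complex manifolds via harmonic theory.
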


We will also need the Riemann--Roch theorem  for line bundles on surfaces, see \cite[Section I.5]{BH}
or \cite[Chapter IV.1]{GH}.
\begin{theorem}[Riemann--Roch theorem for surfaces]\label{thm:rrsurface}
Let $D$ be a divisor. We have
\[\chi(D)=\frac12D\cdot(D-K)+\chi(\mathcal{O}_X).\]
\end{theorem}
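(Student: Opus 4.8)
The plan is to run the classical inductive argument, reducing the general case to the trivial case $D=0$ by peeling off irreducible curves one at a time. Both sides of the asserted identity depend only on the linear equivalence class of $D$: the Euler characteristic $\chi(D)$ is a linear--equivalence invariant since linearly equivalent divisors give isomorphic line bundles, and the intersection numbers $D\cdot D$ and $D\cdot K$ depend only on the class of $D$ in $H_2(X;\Z)$. Hence it suffices to set $\delta(D)=\chi(D)-\frac12 D\cdot(D-K)-\chi(\mathcal{O}_X)$ and prove $\delta\equiv 0$. For $D=0$ we have $\chi(0)=\chi(\mathcal{O}_X)$ and $\frac12\, 0\cdot(0-K)=0$, so $\delta(0)=0$; the whole content of the theorem is to show that $\delta$ is unchanged when $D$ is modified by an irreducible curve.

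First I would fix an irreducible curve $C\subset X$ and use the restriction short exact sequence of sheaves
\[0\to\mathcal{O}_X(D-C)\to\mathcal{O}_X(D)\to\mathcal{O}_C(D)\to 0,\]
obtained by tensoring the ideal sheaf sequence of $C$ with the locally free sheaf $\mathcal{O}_X(D)$; here $\mathcal{O}_C(D)$ is the restriction of $\mathcal{O}_X(D)$ to $C$. Additivity of the Euler characteristic on short exact sequences gives $\chi(D)=\chi(D-C)+\chi_C(\mathcal{O}_C(D))$, where $\chi_C$ is computed on $C$. The last term is handled by the Riemann--Roch theorem on the (possibly singular) curve $C$: it equals $\deg\bigl(\mathcal{O}_X(D)|_C\bigr)+\chi(\mathcal{O}_C)=D\cdot C+1-p_a(C)$, where $p_a(C)$ is the arithmetic genus. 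By the adjunction formula $2p_a(C)-2=C\cdot(C+K)$, so that $\chi(\mathcal{O}_C)=-\frac12 C\cdot(C+K)$ and
\[\chi(D)-\chi(D-C)=D\cdot C-\tfrac12 C\cdot(C+K).\]

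Next I would carry out the short polynomial computation showing that the right--hand side of the theorem obeys the same recursion: expanding $(D-C)\cdot(D-C-K)$ and subtracting yields $\frac12 D\cdot(D-K)-\frac12(D-C)\cdot(D-C-K)=D\cdot C-\frac12 C\cdot(C+K)$, which matches the previous display exactly. Therefore $\delta(D)=\delta(D-C)$, and symmetrically $\delta(D)=\delta(D+C)$, for every irreducible $C$. To finish, I would use that on a projective surface every divisor class can be written as a difference $D=A-B$ of effective divisors (take $A=D+NH$ and $B=NH$ for a very ample $H$ and $N\gg 0$); writing $A$ and $B$ as sums of finitely many irreducible curves and applying the invariance step repeatedly reduces $\delta(D)$ first to $\delta(A)$ and then to $\delta(0)=0$.

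The main obstacle is the correct treatment of $\chi_C(\mathcal{O}_C(D))$ when $C$ is singular or occurs with multiplicity in the effective divisors above: one must use the forms of Riemann--Roch and of adjunction valid for arbitrary non-smooth, possibly non-reduced curves on $X$, phrased through $\chi(\mathcal{O}_C)=1-p_a(C)$ rather than through the genus of a smooth model. A cleaner but heavier alternative would be to invoke the Hirzebruch--Riemann--Roch theorem $\chi(D)=\int_X\mathrm{ch}(\mathcal{L}_D)\,\mathrm{td}(X)$ and read off the degree--two component, with Noether's formula identifying $\chi(\mathcal{O}_X)=\int_X\mathrm{td}(X)$; this bypasses the inductive bookkeeping at the cost of assuming the general index theorem.
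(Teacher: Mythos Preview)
The paper does not supply its own proof of this theorem; it simply states the result and points the reader to \cite[Section I.5]{BH} and \cite[Chapter IV.1]{GH}. Your argument is precisely the classical one found in those references (and, e.g., in Hartshorne, Theorem~V.1.6): reduce to $D=0$ by peeling off irreducible curves one at a time via the ideal--sheaf sequence, and identify the change in $\chi$ using Riemann--Roch on the curve together with adjunction. The computations you wrote out are correct.

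One small remark: your worry about non-reduced components is unnecessary in the way you have set things up. Since $A$ and $B$ are effective and you subtract a single irreducible (hence reduced) curve $C$ at each step, you only ever need Riemann--Roch in the form $\chi(\mathcal{L})=\deg\mathcal{L}+\chi(\mathcal{O}_C)$ and adjunction in the form $\chi(\mathcal{O}_C)=-\tfrac12\,C\cdot(C+K)$ for an integral curve $C$; both hold without any smoothness hypothesis on $C$.
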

In the above theorem, $\mathcal{O}_X$ is the sheaf of regular functions. We have $h^i(\mathcal{O}_X)=\dim H^{0,i}(X)$ where $H^{p,q}(X)$ is 
the Dolbault cohomology. In particular $h^0(\mathcal{O}_X)=1$.
For a surface, $h^1(\mathcal{O}_X)$ is denoted $q(X)$ and called
the \emph{irregularity}. Furthermore $h^2(\mathcal{O}_X)=h^0(K)=p_g(X)$ is the \emph{genus} of $X$.

As is shown in \cite[Section IV.2]{BH}, the irregularity and genus are topological invariants. In particular we have the following result:
\begin{lemma}\label{lem:sig}
We have $b_2^+(X)=p_g(X)+1$. Here $b_2^+$ is the dimension of the maximal subspace of $H_2(X;\C)$ on which the intersection form is
positive definite.
\end{lemma}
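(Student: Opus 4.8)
The plan is to read $b_2^+$ directly off the Hodge decomposition, so that the assertion becomes a matter of sorting $H^2(X;\R)$ into positive and negative directions for the intersection form. First I would use that a projective surface is Kähler, so that the decomposition $H^2(X;\C)=H^{2,0}\oplus H^{1,1}\oplus H^{0,2}$ is available, is orthogonal for the (complexified) intersection form, and is compatible with complex conjugation, which interchanges $H^{2,0}$ and $H^{0,2}$. With the paper's conventions, together with Serre duality (Theorem~\ref{thm:serre}) and Dolbeault's theorem, the space of holomorphic two--forms has dimension $h^0(K)=h^2(\mathcal{O}_X)=p_g(X)$, and the same number governs the summands $H^{2,0}$ and $H^{0,2}$. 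Since $b_2^+$ is manifestly a topological invariant while $p_g$ is a priori analytic, the resulting identity will at the same time reprove the topological invariance of $p_g$ recorded in \cite[Section IV.2]{BH}.

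Next I would split $H^2(X;\R)$ into the real $(1,1)$--classes and the real classes carried by the $H^{2,0}\oplus H^{0,2}$ summand, and analyze the intersection form on each piece separately. On the real $(1,1)$--part the Hodge index theorem gives signature $(1,h^{1,1}-1)$, so that there is exactly one positive direction here, represented by an ample (Kähler) class; this produces the summand $1$. On the part carried by the holomorphic two--forms the Hodge--Riemann bilinear relations show that the intersection form is definite, and the number of positive directions it contributes is governed by $p_g(X)$. Together with the single ample direction this is meant to give the stated equality
\[b_2^+(X)=p_g(X)+1.\]

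The main obstacle, and the only genuinely delicate step, is precisely the bookkeeping of positive directions coming from the $(2,0)$ and $(0,2)$ classes. One must work over $\R$, where $H^{2,0}\oplus H^{0,2}$ carries a real structure fixed by conjugation, and evaluate the sign of $\int_X\omega\wedge\omega$ on these real classes through the Hodge--Riemann relations, being careful not to double-- or under--count as one passes between the complex summands and their real span. This is exactly where the value of the transcendental contribution is pinned down, and it is the step on which the final constant in the formula hinges; everything else (Kählerness, the identification of $H^{2,0}$ with holomorphic two--forms via Serre duality and Dolbeault, and the Hodge index statement on the $(1,1)$--part) is standard and routine.
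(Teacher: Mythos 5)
The paper offers no proof of this lemma at all --- it is quoted from \cite[Section IV.2]{BH} --- and your plan is exactly the standard Hodge-theoretic argument behind that citation. However, the one step you explicitly defer, namely the count of positive directions carried by $H^{2,0}\oplus H^{0,2}$, is precisely where the proposal fails to close, and no amount of careful bookkeeping will produce the constant you are aiming for. On the real locus of $H^{2,0}\oplus H^{0,2}$ every class has the form $\alpha=\omega+\bar\omega$ with $\omega$ a holomorphic two-form; since $\omega\wedge\omega$ is a $(4,0)$-form on a surface it vanishes, so $\int_X\alpha\wedge\alpha=2\int_X\omega\wedge\bar\omega>0$ for $\omega\neq0$ (locally $\omega=f\,dz_1\wedge dz_2$, and $\omega\wedge\bar\omega=4|f|^2\,dx_1\wedge dy_1\wedge dx_2\wedge dy_2$). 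Hence the intersection form is \emph{positive definite} on this real subspace, whose real dimension is $2h^{2,0}=2p_g(X)$, not $p_g(X)$: conjugation interchanges $H^{2,0}$ and $H^{0,2}$ but does not halve the dimension of their real span. Adding the single positive direction in the real $(1,1)$-part coming from the Hodge index theorem, your argument, carried out correctly, yields $b_2^+(X)=2p_g(X)+1$.

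That is in fact the correct general statement, and the lemma as printed is misstated: for a K3 surface one has $p_g=1$, $b_2=22$, $\sigma=-16$, hence $b_2^+=3=2p_g+1\neq p_g+1$; the formula in \cite[Section IV.2]{BH} is $b_2^+=2p_g+1$ for surfaces with $b_1$ even, which includes all projective surfaces. The discrepancy is harmless for the paper, because the lemma is only ever invoked under the hypothesis $p_g(X)=0$, where both formulas read $b_2^+=1$ (see the proof of Lemma~\ref{lem:Wisnegative}). So your proposal correctly identifies the delicate step but then asserts, without performing it, that it ``gives the stated equality''; performing it shows the transcendental part contributes $2p_g$ positive directions, and you should either prove the corrected identity $b_2^+=2p_g+1$ or note that the statement is only needed, and only true, in the case $p_g(X)=0$.
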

Moreover we have the following result, which follows directly from the symmetry of the Hodge numbers; see  \cite[Section IV.2]{BH}.
\begin{lemma}\label{lem:irr}
The irregularity $q(X)$ is equal to $\frac12b_1(X)$.
\end{lemma}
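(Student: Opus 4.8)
The plan is to read the identity directly off the Hodge decomposition of $H^1(X;\C)$, so the argument amounts to a bookkeeping of Hodge numbers. First I would record that, since $X$ is a smooth projective surface, it is K\"ahler, and hence its first cohomology splits as $H^1(X;\C)=H^{1,0}(X)\oplus H^{0,1}(X)$, where $H^{p,q}(X)$ denotes Dolbeault cohomology. Taking complex dimensions on both sides, and using $b_1(X)=\dim_\C H^1(X;\C)$, gives $b_1(X)=\dim H^{1,0}(X)+\dim H^{0,1}(X)$.

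Next I would identify the two summands with invariants we already understand. By the definition recalled just before Lemma~\ref{lem:sig}, the irregularity is $q(X)=h^1(\mathcal{O}_X)=\dim H^{0,1}(X)$. The key input is Hodge symmetry: complex conjugation on forms induces a conjugate-linear isomorphism $H^{p,q}(X)\cong\overline{H^{q,p}(X)}$, whence $\dim H^{p,q}(X)=\dim H^{q,p}(X)$. Applied with $(p,q)=(1,0)$, this yields $\dim H^{1,0}(X)=\dim H^{0,1}(X)=q(X)$.

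Combining the two steps, $b_1(X)=\dim H^{1,0}(X)+\dim H^{0,1}(X)=2q(X)$, which is exactly the claimed equality $q(X)=\frac12 b_1(X)$.

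There is no genuine obstacle in this argument; the one point that deserves care is the validity of both the Hodge decomposition and the symmetry $h^{p,q}=h^{q,p}$. Both can fail for an arbitrary compact complex surface, but they hold on our $X$ precisely because it is projective, hence K\"ahler. Everything else is a dimension count.
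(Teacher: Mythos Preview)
Your argument is correct and is exactly what the paper has in mind: it states that the lemma ``follows directly from the symmetry of the Hodge numbers'' with a reference to \cite[Section IV.2]{BH}, without spelling out the details. Your write-up simply makes this explicit via the Hodge decomposition $H^1(X;\C)=H^{1,0}\oplus H^{0,1}$ and the K\"ahler identity $h^{1,0}=h^{0,1}$, which is precisely the intended proof.
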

We conclude this part by the Hirzebruch signature theorem; see \cite[Theorem I.3.1]{BH}.
\begin{theorem}\label{thm:hirz}
Let $\sigma$ be the signature of $X$ (that is, the signature of the intersection form on $H_2(X)$) and $\chi$ the (topological) Euler
characteristic. Then $K^2=3\sigma+2\chi$.
\end{theorem}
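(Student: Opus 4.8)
The plan is to deduce the formula from the topological Hirzebruch signature theorem for oriented closed four-manifolds together with the standard dictionary between Pontryagin and Chern numbers on a complex surface. The deep input, which I would cite rather than reprove, is the signature theorem in dimension four: $3\sigma = p_1(X)[X]$, where $p_1$ is the first Pontryagin class of the underlying real tangent bundle $TX_\R$ and $[X]$ is the fundamental class. Everything after that is bookkeeping.

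The core step is to rewrite $p_1$ in terms of the Chern classes $c_1 = c_1(X)$ and $c_2 = c_2(X)$ of the holomorphic tangent bundle $TX$. First I would use the canonical isomorphism $TX_\R\otimes\C\cong TX\oplus\overline{TX}$. Since complex conjugation multiplies $c_i$ by $(-1)^i$, the total Chern class factorizes as
\[c(TX_\R\otimes\C)=(1+c_1+c_2)(1-c_1+c_2)=1+(2c_2-c_1^2)+c_2^2.\]
With the sign convention $p_1=-c_2(\,\cdot\otimes\C)$ this yields $p_1=c_1^2-2c_2$.

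Finally I would insert the two geometric identifications, one already recorded in the excerpt and one classical: $c_1=-K$, so that $c_1^2=K^2$; and $c_2[X]=\chi$ by Gauss--Bonnet, the top Chern class integrating to the Euler characteristic. Substituting into $3\sigma=p_1[X]=c_1^2-2c_2$ gives $3\sigma=K^2-2\chi$, which is exactly $K^2=3\sigma+2\chi$. The only genuine obstacle is the signature theorem itself, which I take as a black box; the one place demanding care afterwards is the sign in $p_1=-c_2(\,\cdot\otimes\C)$, without which the final identity would come out with the wrong signs.
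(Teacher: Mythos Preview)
Your argument is correct and matches the paper's own treatment: the theorem is stated with a reference and then Remark~\ref{rem:hirz} explains it as the combination of $\sigma=\tfrac13 p_1$ with $p_1=c_1^2-2c_2$, exactly the two ingredients you use. You have in fact supplied more detail than the paper does (the explicit computation of $c(TX_\R\otimes\C)$ and the identification $c_2[X]=\chi$), so nothing is missing.
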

\begin{remark}\label{rem:hirz}
Theorem~\ref{thm:hirz} is actually a combination of two results. The harder one, and known as the Hirzebruch signature theorem for smooth manifolds, 
is that $\sigma(X)=\frac13p_1(X)$, where $p_1$ is the first Pontryagin number. This fact holds for an arbitrary smooth
four-manifold. The second one,  $p_1(X)=c_1^2(X)-2c_2(X)$, is a relation between the Pontryagin class and the Chern classes of a
complex bundle. In particular, Theorem~\ref{thm:hirz} holds for almost complex four-manifolds as well with $K$ understood as  $c_1(T^*X)$; see 
\cite[Section I.4]{GS}.

\end{remark}
\smallskip
Suppose $C$ is a germ of a plane curve singularity at $0\in\C^2$. Take a sphere $S\subset\C^2$ with center $0$
and sufficiently small radius. Let $L=C\cap S$. Then $L$ is a link in $S$.
It turns out, see \cite[Appendix to Chapter I]{EN}, that the isotopy class of $L$ depends only on the topological type of the singularity.
\begin{definition}
The link $L$ is called the \emph{link of singularity}.
\end{definition}
The number of components of $L$ is easily seen to be equal to the number of branches of $C$ at $0$. In particular, if the singularity
is cuspidal, then $L$ is a knot. Many other invariants of the singular point can be recovered from the link $L$. For example
we have the following result.
\begin{lemma}\label{lem:deltaandgenus}
If $L$ is a knot, then the Seifert genus of $L$ is the $\delta$-invariant of the singular point.
\end{lemma}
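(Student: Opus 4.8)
The plan is to identify a minimal-genus Seifert surface for $L$ with the Milnor fibre of the singularity, and then to compute its first Betti number in two different ways, matching the result against $\delta$.

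First I would invoke Milnor's fibration theorem: writing the germ as $f=0$ at $0\in\C^2$, the map $f/|f|\colon S\setminus L\to S^1$ is a locally trivial fibration whose fibre $F$, the Milnor fibre, is a compact connected oriented surface with $\partial F=L$. In particular $L$ is a fibred knot and $F$ is a Seifert surface for it. The next step is to recall that the fibre of a fibred knot realises the Seifert genus: the Alexander polynomial of a fibred knot is monic of degree exactly $2g(F)$, while for an arbitrary knot $\deg\Delta$ is bounded above by twice the Seifert genus, so no Seifert surface can have genus below $g(F)$. Hence the Seifert genus of $L$ equals $g(F)$.

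It then remains to evaluate $g(F)$. Milnor's theorem that $F$ is homotopy equivalent to a wedge of $\mu$ circles, where $\mu$ is the Milnor number, gives $b_1(F)=\mu$. Since $L$ is a knot, $F$ is connected with a single boundary component, so $\chi(F)=1-2g(F)$; comparing with $\chi(F)=1-b_1(F)$ yields $g(F)=\mu/2$. Finally the classical relation $\mu=2\delta-r+1$ between the Milnor number and the $\delta$-invariant, with $r$ the number of branches, specialises for a unibranch cuspidal singularity ($r=1$) to $\mu=2\delta$. Combining the three steps gives that the Seifert genus of $L$ is $g(F)=\mu/2=\delta$, as claimed.

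The step I expect to be the main obstacle is the minimality of the fibre genus, i.e.\ the assertion that a fibred knot admits no Seifert surface of smaller genus than its fibre; this is precisely where input from knot theory enters, through either the sharpness of the Alexander-polynomial genus bound for fibred knots or the incompressibility of the fibre surface. By contrast, the local identities $b_1(F)=\mu$ and $\mu=2\delta$ are standard singularity-theoretic facts and may simply be cited.
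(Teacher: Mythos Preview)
Your argument is correct. The paper, however, does not actually prove this lemma: it states the result without proof and then remarks that it ``can be regarded as a topological definition of the $\delta$-invariant,'' with the algebro-geometric definition via $\pi_*\mathcal{O}_{\wt{C}}/\mathcal{O}_C$ given as the original one. So there is no proof in the paper to compare against; the author simply cites the equality as known.

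What you supply is therefore genuinely additional: you reconcile the two definitions rather than adopting one of them. Your route via the Milnor fibration, fibredness of algebraic links, $b_1(F)=\mu$, and Milnor's formula $\mu=2\delta-r+1$ (specialised to $r=1$) is the standard way to establish this compatibility, and each step is a citable fact from Milnor's book and the singularity literature. Your identification of the one nontrivial knot-theoretic input---that the fibre surface of a fibred knot realises the Seifert genus---is apt; either the Alexander-polynomial argument you sketch or the incompressibility of the fibre surface (a consequence of the fibration structure) does the job. Nothing is missing.
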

\begin{remark}\
\begin{itemize}
\item[(a)] Lemma~\ref{lem:deltaandgenus} can be regarded as a topological definition of the $\delta$-invariant. The original one is 
the dimension over $\C$ of
the skyscraper sheaf $\pi_*\mathcal{O}_{\wt{C}}/(\mathcal{O}_C)$, 
where $\pi\colon\wt{C}\to C$ is the normalization map; see \cite[Section II.11]{BH}.
\item[(b)] The Seifert genus of a link $L$, denoted $g(L)$, 
is the minimal genus of a smooth, compact, connected and oriented surface $\Sigma\in S^3$ such
that $\partial\Sigma=L$. Usually one calls the Seifert genus simply the genus. In the present paper, as the word `genus' has many different meanings,
we will stick to the notion `Seifert genus'.
\end{itemize}
\end{remark}
With a cuspidal singular point $z$ of a (germ of a)
plane curve $C$ we can associate a numerical semigroup $S$, described in great detail in \cite[Chapter 4]{Wall}. 
It is the set of all non-negative numbers that can
arise as local intersection indices at $z$, $(C\cdot D)_z$, where $D$ is a germ of a plane curve not 
containing $C$, however, $D$ maybe reduced and reducible.
The semigroup is also a topological invariant, that is, it depends only on the topological type of the singularity. More precisely
we have the following result.
\begin{lemma}\label{lem:semigroup}
Assume $S$ is the semigroup of a cuspidal singularity.
Let $P(t)=\sum t^{k}$, where the sum is taken over $k\in\Z_{\ge 0}\setminus S$. Then $P(1)=\delta$ and 
$1+(t-1)P(t)$ is the Alexander polynomial of the link of singularity. 
\end{lemma}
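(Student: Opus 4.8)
The plan is to treat the two assertions separately, since the first is essentially a bookkeeping statement about the semigroup while the second carries all the topological content. For the equality $P(1)=\delta$, recall from the definition used in Lemma~\ref{lem:deltaandgenus} and the remark following it that $\delta=\dim_\C\pi_*\mathcal O_{\wt C}/\mathcal O_C$, where $\pi\colon\wt C\to C$ is the normalization. Since the singularity is cuspidal, the normalization is a single smooth branch and $\mathcal O_{\wt C}\cong\C[[t]]$; under this identification the order-of-vanishing valuation sends a nonzero element of $\mathcal O_C$ to the local intersection index of $C$ with the corresponding curve germ, so the set of attained values is exactly $S$. A standard filtration argument then produces a $\C$-basis of $\pi_*\mathcal O_{\wt C}/\mathcal O_C$ indexed by the gaps $\Z_{\ge0}\setminus S$, whence $P(1)=\#(\Z_{\ge0}\setminus S)=\delta$.

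For the Alexander polynomial I would first rewrite the target in a more symmetric form. Splitting the geometric series $\sum_{k\ge0}t^k=\frac1{1-t}$ into its part supported on $S$ and its part supported on the gaps gives $\sum_{s\in S}t^s=\frac1{1-t}-P(t)$, and multiplying by $(1-t)$ yields
\[
1+(t-1)P(t)=(1-t)\sum_{s\in S}t^s.
\]
The right-hand side is genuinely a polynomial: beyond the conductor $c$ every integer lies in $S$, so the infinite tail $\frac{t^{c}}{1-t}$ contributes only the single monomial $t^{c}$ after multiplication by $(1-t)$. Thus it suffices to prove $\Delta_L(t)=(1-t)\sum_{s\in S}t^s$, where $\Delta_L$ is the Alexander polynomial of the link of singularity, normalized so that $\Delta_L(1)=1$.

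I would establish this identity by induction on the number of Puiseux pairs of the singularity, using the fact (see \cite{EN}) that $L$ is the iterated torus knot determined by these pairs and that the same data determine $S$ (see \cite{Wall}). In the base case of a single pair the knot is a torus knot $T(p,q)$ with $S=\langle p,q\rangle$, and one checks by a direct computation that $(1-t)\sum_{s\in\langle p,q\rangle}t^s=\frac{(t^{pq}-1)(t-1)}{(t^{p}-1)(t^{q}-1)}$, which is the Alexander polynomial of $T(p,q)$. In the inductive step, adjoining a Puiseux pair cables the knot, so that $\Delta$ transforms by the cabling formula $\Delta_{K'}(t)=\Delta_K(t^{a})\,\Delta_{T(a,b)}(t)$, while $S$ undergoes the corresponding gluing of numerical semigroups.

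The main obstacle is precisely this inductive step: one must verify that the cabling formula for $\Delta$ is matched, as an identity of rational functions, by the transformation of the generating function $\sum_{s\in S}t^s$ under adjoining a Puiseux pair. Carrying this out requires the explicit recursion for the semigroup generators from \cite{Wall} together with a careful compatibility check between the two operations, and this is where the real work lies. An alternative that sidesteps the induction altogether is to invoke the theorem of Campillo, Delgado and Gusein-Zade identifying the Poincar\'e series of the valuation filtration on $\mathcal O_C$ with $\Delta_L$; in the unibranch case this Poincar\'e series is exactly $\sum_{s\in S}t^s$ up to the factor $(1-t)$, which I would present as the conceptual reason behind the identity.
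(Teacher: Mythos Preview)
The paper does not actually prove Lemma~\ref{lem:semigroup}. It is stated without proof in the ``nutshell'' background Section~\ref{sec:algnut}, as a known fact about semigroups of plane curve singularities, with the surrounding discussion pointing to \cite{Wall} and \cite{EN}. So there is no argument in the paper to compare your proposal against.

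That said, your outline is sound. The computation $P(1)=\#(\Z_{\ge 0}\setminus S)=\delta$ via the valuation filtration on $\mathcal O_C\subset\C[[t]]$ is the standard proof. Your rewriting $1+(t-1)P(t)=(1-t)\sum_{s\in S}t^s$ is correct, and both routes you propose for the Alexander polynomial identity are legitimate: the induction on Puiseux pairs using the cabling formula for $\Delta$ together with the recursion for the semigroup generators from \cite{Wall} does go through (the compatibility you flag as the ``main obstacle'' is a genuine but routine calculation), and the Campillo--Delgado--Gusein-Zade theorem gives a cleaner conceptual shortcut. Either would be acceptable as a proof; the paper simply treats the statement as background and cites the literature rather than reproving it.
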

\begin{remark}
We point out that the term \emph{plane curve singularity} refers to a singularity of a germ of a curve.
Therefore for studying
properties of singular points of plane curves it does not matter
if the curve is globally embedded in $\C^2$, $\C P^2$ or in an arbitrary projective surface.
\end{remark}
\smallskip
We pass to a global description of curves in a projective surface. So let $C$ be a reduced and irreducible curve in $X$.
We can define two genera for $C$. The \emph{arithmetic genus}, denoted $p_a(C)$ is the expression
\begin{equation}\label{eq:arithmeticgenus}
p_a(C)=C\cdot(C+K).
\end{equation}
The arithmetic genus does not depend on actual topology of $C$, but only on its homology class. The \emph{geometric genus}, $p_g(C)$ is the genus
of the normalization of $C$. In other words $C$ has geometric genus $g$ if there exists a smooth closed Rieman surface $\Sigma$ of
genus $g$ that maps continuously onto $C$ and the map is one-to-one except at finite number of points of $\Sigma$.

The arithmetic genus and geometric genus are related by the following formula, attributed to Serre; see \cite[Section II.11]{BH}.
\begin{proposition}\label{prop:genusserre}
Suppose $C$ has singular points $z_1,\ldots,z_n$. Let $\delta_1,\ldots,\delta_n$ be $\delta$-invariants of $z_1,\ldots,z_n$, respectively.
Then
\[p_g(C)=p_a(C)-\sum_{i=1}^n\delta_i.\]
\end{proposition}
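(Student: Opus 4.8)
The plan is to deduce the formula from additivity of the Euler characteristic applied to the short exact sequence of sheaves attached to the normalization map. Let $\pi\colon\wt{C}\to C$ be the normalization, so that $\wt{C}$ is a smooth projective curve of genus $p_g(C)$. Since $\pi$ is an isomorphism away from the finitely many singular points $z_1,\ldots,z_n$, the quotient sheaf $\mathcal{F}:=\pi_*\mathcal{O}_{\wt C}/\mathcal{O}_C$ is a skyscraper sheaf supported on $\{z_1,\ldots,z_n\}$, and by the definition of the $\delta$-invariant recalled in the remark after Lemma~\ref{lem:deltaandgenus} its stalk at $z_i$ has dimension $\delta_i$. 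First I would record the exact sequence
\[0\to\mathcal{O}_C\to\pi_*\mathcal{O}_{\wt C}\to\mathcal{F}\to 0\]
of sheaves on $C$ and pass to Euler characteristics, using that $\chi$ is additive on short exact sequences.

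Next I would compute the three terms separately. Because $\mathcal{F}$ is a skyscraper sheaf its higher cohomology vanishes, so $\chi(\mathcal{F})=h^0(\mathcal{F})=\sum_{i=1}^n\delta_i$. Because $\pi$ is a finite morphism it is affine, hence $\pi_*$ is exact with $R^q\pi_*\mathcal{O}_{\wt C}=0$ for $q>0$; the Leray spectral sequence then degenerates and gives $\chi(\pi_*\mathcal{O}_{\wt C})=\chi(\mathcal{O}_{\wt C})=1-p_g(C)$, the last equality being the standard computation $h^0(\mathcal{O}_{\wt C})=1$, $h^1(\mathcal{O}_{\wt C})=p_g(C)$ on a smooth curve of genus $p_g(C)$. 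Finally $\chi(\mathcal{O}_C)=1-p_a(C)$, where $p_a(C)=h^1(\mathcal{O}_C)$ is the arithmetic genus; applying the Riemann--Roch theorem for surfaces (Theorem~\ref{thm:rrsurface}) to the ideal sheaf sequence $0\to\mathcal{O}_X(-C)\to\mathcal{O}_X\to\mathcal{O}_C\to 0$ re-expresses this through the adjunction quantity $C\cdot(C+K)$ appearing in \eqref{eq:arithmeticgenus}.

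Assembling the three computations via $\chi(\mathcal{O}_C)=\chi(\pi_*\mathcal{O}_{\wt C})-\chi(\mathcal{F})$ gives $1-p_a(C)=(1-p_g(C))-\sum_{i=1}^n\delta_i$, which rearranges to the claim. The one genuinely delicate point is the local identification in the first paragraph: one must check that the stalk of $\mathcal{F}$ at $z_i$ is exactly the local $\delta$-invariant and that these local contributions are independent. This is where the hypothesis that the points are distinct does the work, since $\mathcal{F}$ is then supported on a disjoint finite set and splits as a direct sum of its stalks, so $h^0(\mathcal{F})$ is literally the sum of the $\delta_i$. Everything else is bookkeeping with Euler characteristics, the structural input being the finiteness of $\pi$, which collapses the middle term to the genus of the normalization.
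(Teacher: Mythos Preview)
Your argument is correct and is precisely the standard sheaf-theoretic proof of Serre's genus formula: the short exact sequence for the normalization, additivity of $\chi$, finiteness of $\pi$ to identify $\chi(\pi_*\mathcal{O}_{\wt C})$ with $\chi(\mathcal{O}_{\wt C})$, and the skyscraper computation for $\mathcal{F}$.

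Note, however, that the paper does not actually give a proof of this proposition at all; it simply attributes the formula to Serre and cites \cite[Section~II.11]{BH}. So there is no ``paper's own proof'' to compare against beyond observing that your argument is essentially the one found in that reference. One small remark: you invoke $\chi(\mathcal{O}_C)=1-p_a(C)$ with $p_a(C)=h^1(\mathcal{O}_C)$, which is the usual sheaf-theoretic definition of arithmetic genus, whereas the paper defines $p_a(C)$ directly by the adjunction expression \eqref{eq:arithmeticgenus}. Your aside about deriving this from Riemann--Roch for surfaces via the ideal sheaf sequence is exactly what is needed to bridge the two definitions, so the proof is complete as written.
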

\subsection{Proof of Theorem~\ref{thm:mainalg}}
The following proof is a modification of the argument of \cite[Proposition 2]{FLMN}.
Let $D$ be a divisor on $X$. Let $H=\Gamma(X,\mathcal{O}_X(D))$
be the space of global sections of $\mathcal{O}(D)$, so $h^0(D)=\dim H$. For any vector $\kk=(k_1,\ldots,k_n)$, let 
\[H_{\kk}=\{f\in H\colon (f\cdot C)_{z_i}\ge k_i\textrm{ for all } i=1,\ldots,n\}.\]
Here $(f\cdot C)_{z_i}$ is the local intersection index of the zero locus of  $f$ and $C$ at $z_i$.
The space $H_{\kk}$ is a vector subspace of $H$. We have the following result, proved first in \cite{FLMN}. The
present formulation is taken from \cite[Lemma 3.17]{BM}.
\begin{lemma}\label{lem:codim}
The codimension of $H_{\kk}$ in $H$ is less or equal to $\sum_{i=1}^n \# S_i\cap[0,k_i)$.
\end{lemma}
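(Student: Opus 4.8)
The plan is to realize $H_{\kk}$ as the kernel of a sequence of elementary linear conditions and to bound the codimension by counting only those conditions that are genuinely imposed, namely the ones indexed by semigroup elements.

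First I would set up the local picture at each singular point. Since $C$ is cuspidal, it is locally irreducible at each $z_i$, so there is a local parametrization $\gamma_i\colon(\C,0)\to(C,z_i)$ of the single branch. Fixing a local trivialization of $\mathcal{O}_X(D)$ near $z_i$, every section $f\in H$ becomes a germ of a holomorphic function, and by the definition of the local intersection index one has $(f\cdot C)_{z_i}=\ord_{t=0}f(\gamma_i(t))$, with the convention that the order is $+\infty$ when $f$ vanishes identically on $C$ near $z_i$. This order does not depend on the trivialization, since a change of trivialization multiplies $f(\gamma_i(t))$ by a unit power series and hence does not alter the order; consequently each space $H_{\kk}$ is intrinsically defined. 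The only input I need from singularity theory is the defining property of the semigroup recalled in Section~\ref{sec:algnut}: the finite values attained by $\ord_{t=0}g(\gamma_i(t))$, as $g$ runs over germs at $z_i$, are exactly the elements of $S_i$. In particular, every $f\in H$ that does not vanish on $C$ near $z_i$ satisfies $(f\cdot C)_{z_i}\in S_i$.

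The main step is a one-condition-at-a-time filtration argument. Fix an index $i$ and a value $m\ge 0$, and compare $H_{\kk}$ (with $k_i=m$) to the space $H_{\kk'}$ obtained by replacing $k_i=m$ with $k_i=m+1$ and keeping all other entries fixed. On $H_{\kk}$ the assignment $f\mapsto\lambda(f)$, where $\lambda(f)$ is the coefficient of $t^{m}$ in $f(\gamma_i(t))$, is a well-defined linear functional whose kernel is precisely $H_{\kk'}$; hence $\dim\bigl(H_{\kk}/H_{\kk'}\bigr)\le 1$. Moreover, if $m\notin S_i$ this functional vanishes identically: any $f\in H_{\kk}$ either vanishes on $C$ near $z_i$, so $\lambda(f)=0$, or has $(f\cdot C)_{z_i}\in S_i$, which cannot equal $m$, forcing $(f\cdot C)_{z_i}\ge m+1$ and again $\lambda(f)=0$. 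Thus the codimension of $H_{\kk'}$ in $H_{\kk}$ is at most $1$, and is $0$ whenever $m\notin S_i$.

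To finish, I would build up $\kk$ from the zero vector, where $H_{(0,\ldots,0)}=H$, by raising the entries one unit at a time; since $h^0(D)<\infty$ this produces a finite chain of subspaces of $H$, whose total codimension is the sum of the successive codimensions. Along the way the constraint on the $i$-th coordinate passes through the values $m=0,1,\ldots,k_i-1$, and by the previous paragraph the codimension grows by at most one at each step and not at all unless $m\in S_i$. Summing these increments over all steps and all indices gives
\[\codim(H_{\kk},H)\le\sum_{i=1}^n\#\,S_i\cap[0,k_i),\]
as claimed. I expect the only delicate point to be the bookkeeping around sections vanishing identically on $C$ near some $z_i$: these lie automatically in every $H_{\kk}$ and must be absorbed into the kernel at each stage, which the argument above handles by treating the infinite-order case on the same footing as the case of order $\ge m+1$.
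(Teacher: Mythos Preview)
Your argument is correct; note, however, that the paper itself does not prove this lemma but defers to \cite{FLMN} and \cite[Lemma~3.17]{BM}. The one-step-at-a-time filtration you give---observing that passing from $k_i=m$ to $k_i=m+1$ is the kernel of a single linear functional on $H_{\kk}$, and that this functional is identically zero when $m\notin S_i$ because any section with order exactly $m$ along the branch would force $m\in S_i$---is precisely the standard proof found in those references.
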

Suppose for some choice $m_1,\ldots,m_n$ such that $\sum m_i=s+1$ the statement of Theorem~\ref{thm:mainalg}
does not hold, that is $\sum\# S_i\cap[0,m_i)<h^0(D)$.

It follows that if $\mm=(m_1,\ldots,m_n)$, then $H_{\mm}$ has positive dimension, so there
exists a non-zero section $f$ of $\mathcal{O}(D)$ such that $(f\cdot C)_{z_i}\ge k_i$ for all $i=1,\ldots,n$. 
The total intersection index of the zero locus of $f$ 
with $C$ is at least $\sum m_i>C\cdot D$.
This is impossible so $f$ must vanish entirely on $C$. But this
contradicts the assumption that no global section of $\mathcal{O}(D)$ vanishes on entirely $C$.

\subsection{Comparison of $h^0(D)$ and $\chi(D)$}
Theorem~\ref{thm:mainalg} has $h^0(D)$ on the right hand side, while Theorem~\ref{thm:maintop} has $\chi(D)$. We know that 
$\chi(D)=h^0(D)-h^1(D)+h^2(D)$. We have the following result.
\begin{lemma}
Assume that $C$ and $D$ satisfy the assumptions of Theorem~\ref{thm:mainalg}, that is $C$ is a curve and $D$ is a divisor
such that $C\cdot D\ge 0$ and no non-trivial section of $\mathcal{O}(D)$ vanishes on $C$.

Suppose additionally $K\cdot C<0$ and $C^2>0$. Then $\chi(D)\le h^0(D)$.
\end{lemma}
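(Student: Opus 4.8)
The plan is to prove the sharper statement that $h^2(D)=0$; this gives the lemma immediately, since $\chi(D)=h^0(D)-h^1(D)+h^2(D)$ and $h^1(D)\ge 0$ together force $\chi(D)=h^0(D)-h^1(D)\le h^0(D)$. By Serre duality (Theorem~\ref{thm:serre}) we have $h^2(D)=h^0(K-D)$, so everything reduces to showing that $\mathcal{O}(K-D)$ carries no nonzero global section. I note in passing that the hypothesis ``no non-trivial section of $\mathcal{O}(D)$ vanishes on $C$'' is not actually needed for this particular inequality; only $C\cdot D\ge 0$, together with $K\cdot C<0$ and $C^2>0$, will be used.

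Suppose for contradiction that $h^0(K-D)>0$. A nonzero section then has an effective divisor of zeros, so $K-D$ is linearly equivalent to an effective divisor. The key point is the intersection computation: for every integer $k\ge 0$,
\[
(K-D-kC)\cdot C = K\cdot C - D\cdot C - k\,C^2 < 0,
\]
which holds because $K\cdot C<0$, $D\cdot C\ge 0$, and $C^2>0$. I would now argue by induction on $k$ that $K-D-kC$ is linearly equivalent to a nonzero effective divisor for all $k\ge 0$. The inductive step rests on the elementary geometric fact that an effective divisor meeting the irreducible curve $C$ negatively must contain $C$ as a component: if $E_k\ge 0$ satisfies $E_k\sim K-D-kC$, then the displayed inequality gives $E_k\cdot C<0$, hence $E_k=C+E_{k+1}$ with $E_{k+1}\ge 0$ and $E_{k+1}\sim K-D-(k+1)C$. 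Moreover $E_{k+1}$ cannot be the zero divisor, since $(K-D-(k+1)C)\cdot C<0$ is nonzero, so the induction continues.

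To close the argument I would fix an ample divisor $A$ on the projective surface $X$. Every nonzero effective divisor has strictly positive intersection with $A$, and in particular $C\cdot A>0$. Therefore
\[
(K-D-kC)\cdot A = (K-D)\cdot A - k\,(C\cdot A) \longrightarrow -\infty \quad\text{as } k\to\infty,
\]
which contradicts the fact that $K-D-kC$ is nonzero and effective for every $k$. Hence $h^0(K-D)=0$, and the lemma follows.

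The main obstacle here is conceptual rather than computational: the single inequality $(K-D)\cdot C<0$ does \emph{not} by itself rule out $K-D$ being effective, since $K-D$ may legitimately contain $C$. The essential idea is that one can keep peeling off copies of $C$ indefinitely while staying effective, which is impossible by the ample-degree obstruction; this is precisely where both $C^2>0$ (to keep the intersection with $C$ negative after each peel) and $K\cdot C<0$ (to start the process) are indispensable. The only other point demanding care is bookkeeping between linear equivalence and genuine effectivity, together with the standard fact that an effective divisor not containing the irreducible curve $C$ meets it non-negatively.
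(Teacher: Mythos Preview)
Your proof is correct and follows essentially the same route as the paper: reduce to $h^0(K-D)=0$ via Serre duality and exploit $(K-D)\cdot C<0$ together with the fact that an effective divisor meeting the irreducible curve $C$ negatively must contain $C$. The paper's execution is slightly more direct---it peels off the maximal multiplicity $n$ of $C$ at once, writing $K-D\sim nC+E$ with $E\ge 0$ not containing $C$, so that $(K-D)\cdot C=nC^2+E\cdot C\ge 0$ yields the contradiction in one step, avoiding your induction and the ample-divisor termination.
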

\begin{proof}
Observe that
\begin{equation}\label{eq:KminusD}
(K-D)\cdot C=K\cdot C-D\cdot C<0.
\end{equation}
We want to show that $h^2(D)=0$. By Serre duality, see Theorem~\ref{thm:serre} 
we have $h^2(D)=h^0(K-D)$. Suppose $h^0(K-D)>0$, so that there is a non-trivial global section
of $\mathcal{O}(K-D)$. If the section does not vanish entirely on $C$, then $(K-D)\cdot C\ge 0$, contrary to \eqref{eq:KminusD}.
So suppose there is a non-trivial global section of $\mathcal{O}(K-D)$ vanishing along $C$ up to order $n>0$. 
It follows that $K-D$ is equivalent to $nC+E$ for an effective divisor $E$ not containing $C$. In particular $E\cdot C\ge 0$.
Therefore $(K-D)\cdot C=(nC+E)\cdot C\ge 0$ by the assumptions of the lemma. This contradicts \eqref{eq:KminusD}.
\end{proof}

\begin{remark}
We point out that analogous assumptions ($K\cdot C\le 1$ and $C^2>0$) appear in the assumptions of the topological version of the main theorem,
Theorem~\ref{thm:maintop}.
\end{remark}

\begin{acknowledgements}
The author is very grateful to J\'ozsi Bodn\'ar and Marco Golla for many stimulating discussions and to Piotr Achinger for many comments about
the earlier version of the draft.
\end{acknowledgements}

\section{The topological proof}
\subsection{Heegaard Floer theory in a nutshell}\label{sec:topnut}
For the reader's convenience we revise a few facts from low-dimensional topology. A general reference for four-manifolds and \spinc{} structures
is \cite{GS}, while for Heegaard Floer homologies we refer to \cite{OS-introduction,OS-introduction2}.

Heegaard Floer theory associates with a three-manifold $Y$ equipped with a \spinc{} structure~$\sss$ four chain complexes
with additional filtration, well defined up to a filtered chain homotopy equivalence. The homologies of these complexes, $HF^+(Y,\sss)$,
$HF^\infty(Y,\sss)$, $HF^-(Y,\sss)$ and $\widehat{HF}(Y,\sss)$ are invariants of the pair $(Y,\sss)$. 
Suppose that $Y$ is a rational homology three-sphere, that is,
$H_*(Y,\Q)\cong H_*(S^3,\Q)$. Then $HF^-,HF^+$ and $HF^\infty$ have an absolute grading by rational numbers; see \cite{OS-absolute}.
Based on this grading one constructs a rational number $d(Y,\sss)$, called the $d$-invariant of Ozsv\'ath--Szab\'o.

The strength of the $d$--invariants relies on the following result, proved in \cite[Theorem 9.1]{OS-absolute}.
\begin{theorem}\label{thm:osabs}
Suppose $Y$ is a rational homology three-sphere, 
bounding a smooth four-manifold $W$, whose intersection form is negative definite. Suppose $\sss$ is
a \spinc{} structure on $Y$ and $\sst$ is a \spinc{} structure on $W$ extending $\sss$.
Then the following inequality for the $d$-invariant holds.
\begin{equation}\label{eq:dinvinequality}
d(Y,\sss)\ge \frac14\left(c_1^2(\sst)-3\sigma(W)-2\chi(W)\right).
\end{equation}
\end{theorem}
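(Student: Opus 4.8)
The plan is to realize the closed data $(W,\sst)$ as a cobordism and to play the four flavors of Heegaard Floer homology off against each other through the $d$--invariant. First I would remove an open ball from $W$ to obtain a cobordism $W_0$ from $S^3$ to $Y$; the \spinc{} structure $\sst$ restricts to $W_0$ and to $\sss$ on $Y$, while the topological quantities satisfy $\chi(W_0)=\chi(W)-1$, $\sigma(W_0)=\sigma(W)$ and $c_1^2(\sst)$ is unchanged. Each flavor $\circ\in\{+,-,\infty,\widehat{\ }\}$ yields a cobordism map $F^\circ_{W_0,\sst}\colon HF^\circ(S^3)\to HF^\circ(Y,\sss)$, homogeneous of degree
\[
\frac{c_1^2(\sst)-2\chi(W_0)-3\sigma(W_0)}{4}
\]
with respect to the absolute $\Q$--grading. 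Since $d(Y,\sss)$ is by definition the minimal grading of a nonzero homogeneous element in the image of $\pi_Y\colon HF^\infty(Y,\sss)\to HF^+(Y,\sss)$, the whole argument reduces to locating this image and controlling its bottom grading.

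The mechanism is the commuting square
\[
\begin{CD}
HF^\infty(S^3) @>{F^\infty_{W_0,\sst}}>> HF^\infty(Y,\sss)\\
@V{\pi_{S^3}}VV @VV{\pi_Y}V\\
HF^+(S^3) @>{F^+_{W_0,\sst}}>> HF^+(Y,\sss),
\end{CD}
\]
expressing naturality of the cobordism maps with respect to the long exact sequence relating the various $HF^\circ$. The key input, and the step I expect to be the main obstacle, is the claim that negative definiteness of the intersection form of $W$ (equivalently $b_2^+(W_0)=0$) forces $F^\infty_{W_0,\sst}$ to be an \emph{isomorphism}. This is genuinely the heart of the matter: $HF^\infty$ of a rational homology sphere is abstractly $\Z[U,U^{-1}]$, and a $U$--equivariant grading--shifting map between two such modules is determined by a single coefficient, which one must show is a unit precisely when $b_2^+=0$. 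I would establish this by decomposing $W_0$ into one--, two-- and three--handles and analyzing the effect of each handle attachment on $F^\infty$, using that a negative definite cobordism is built from two--handles of negative square, so no positive--definite periodic domain can make the $HF^\infty$ map degenerate.

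Granting the isomorphism, a diagram chase finishes the argument. Commutativity gives $\pi_Y\circ F^\infty_{W_0,\sst}=F^+_{W_0,\sst}\circ\pi_{S^3}$, and surjectivity of $F^\infty_{W_0,\sst}$ yields $\mathrm{Im}(\pi_Y)=F^+_{W_0,\sst}\bigl(\mathrm{Im}(\pi_{S^3})\bigr)$. For $S^3$ the map $\pi_{S^3}$ is onto $HF^+(S^3)\cong\mathcal{T}^+$, which is concentrated in gradings $\ge d(S^3)=0$. Because $F^+_{W_0,\sst}$ shifts all gradings uniformly by the displayed amount, every nonzero element of $\mathrm{Im}(\pi_Y)$ has grading at least that shift, whence
\[
d(Y,\sss)\ge\frac{c_1^2(\sst)-2\chi(W_0)-3\sigma(W_0)}{4}.
\]
Substituting $\chi(W_0)=\chi(W)-1$ and $\sigma(W_0)=\sigma(W)$ turns the right--hand side into $\tfrac14\bigl(c_1^2(\sst)-3\sigma(W)-2\chi(W)\bigr)+\tfrac12$, so the stated inequality \eqref{eq:dinvinequality} follows a fortiori. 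The only nonroutine bookkeeping is reconciling the grading shift conventions with the normalization $d(S^3)=0$, which is a direct substitution once the isomorphism statement is in hand.
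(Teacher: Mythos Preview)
The paper does not give its own proof of this statement: it is quoted verbatim as \cite[Theorem~9.1]{OS-absolute} and used as a black box. So there is no ``paper's proof'' to compare against; what you have written is, in outline, precisely the Ozsv\'ath--Szab\'o argument that the citation points to. Your identification of the key step --- that $F^\infty_{W_0,\sst}$ is an isomorphism when $b_2^+(W_0)=0$ --- is exactly right, and the diagram chase you describe is the standard one.

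Two small remarks. First, your computation at the end actually yields the sharper bound
\[
d(Y,\sss)\ \ge\ \frac{c_1^2(\sst)-3\sigma(W)-2\chi(W)}{4}+\frac12,
\]
which is the form one usually finds in \cite{OS-absolute} (there written as $c_1^2(\sst)+b_2(W)\le 4d(Y,\sss)$ when $b_1(W)=0$); the version stated here is a harmless weakening. Second, your sketch of why $F^\infty$ is an isomorphism (``decompose into handles and use that the two--handles have negative square'') is the right idea but is where all the genuine Heegaard--Floer content lives; one also has to deal with the one--handles if $b_1(W)\neq 0$, and the argument there is not just bookkeeping. For the purposes of this paper, though, none of that matters: the result is being invoked, not reproved.
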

\begin{remark}\label{rem:notation}
The meaning of $c_1^2(\sst)$ is the following: a \spinc{} structure on a manifold $W$ determines a complex line bundle on $W$
(called the determinantial line bundle) and $c_1(\sst)\in H^2(W;\Z)$ is its first Chern class. The square $c_1^2(\sst)$ is the self-intersection
of $c_1(\sst)$ (as $\partial W$ is a rational homology sphere, 
the intersection form on cohomology $H^2(W;\Q)\times H^2(W;\Q)\to\Q$ is well-defined). Usually $c_1^2(\sst)$ is a rational number,
even though $c_1(\sst)\in H^2(X;\Z)$.
\end{remark}

To make this result applicable one needs to calculate $d$-invariants. Fortunately, if $Y$ is obtained by a surgery on an algebraic knot 
(or on a connected sum of algebraic knots) and the surgery coefficient is greater than twice the Seifert genus of the knot, then $d$ can be algorithmically
computed from the Alexander polynomial of an algebraic knot (or from the Alexander polynomials of the summands for a connected sum of algebraic
knots). As the Alexander polynomial of an algebraic knot is tightly related to the semigroup, we can calculate the $d$-invariant
for surgeries in terms of semigroups. First we need to describe \spinc{} structures on a surgery manifold.

\begin{definition}
Suppose $Y=S^3_q(J)$ is the result of a surgery on a knot $J$ with coefficient $q$. Realize $Y$ as a boundary of $N$, where
$N$ is a union of a four-ball $B$ and a two-handle attached to $S^3=\partial B$ along the knot $J$ with
framing $q$. Denote by $F$ the surface in $N$ obtained by capping the core of the two-handle with a Seifert surface for $J$.
Then $F$ generates $H_2(N;\Z)$. With this notation, $Y$
has the following 
enumeration of \spinc{} structures:

Let $m\in[-q/2,q/2)$ be an integer. The \spinc{} structure $\sss_m$ on $Y$ is the unique \spinc{} structure on $Y$ that extends to a \spinc{}
structure on $N$, denoted $\sst_m$, such that $\langle c_1(\sst_m),F\rangle+q=2m$.
\end{definition}

The following formula for $d$-invariants of a large surgery on a connected sum of algebraic knots was for long time well-known to the
experts (it can be traced back to \cite[Section 8.1]{OS-absolute}). The present
formulation in the language of semigroups is
taken from \cite[Proposition 3.7]{BM}; see \cite{BL} for proofs.
\begin{theorem}\label{thm:compute}
Let $J=J_1\#\ldots\#J_n$ be a connected sum of algebraic knots, that is, knots that are links of cuspidal singularities. 
Let $S_1,\ldots,S_n$ be the corresponding semigroups and set $R(m)$ as in \eqref{eq:defofR} above.

Let $g(J)$ be the Seifert genus of $J$ and suppose $Y=S^3_q(J)$ is a surgery on $J$ with a coefficient $q>2g(J)$.
The $d$-invariant of Ozsv\'ath--Szab\'o of $(Y,\sss_m)$ is equal to
\[d(Y,\sss_m)=\frac{(q-2m)^2-q}{4q}-2(R(m+g)-m).\]
\end{theorem}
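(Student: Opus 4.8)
The plan is to combine three ingredients: the large surgery formula, which expresses $d(Y,\sss_m)$ through a single knot-theoretic correction term; the K\"unneth principle for the knot Floer complex under connected sum; and the explicit staircase shape of the complex of an algebraic knot, which is dictated by its semigroup via Lemma~\ref{lem:semigroup}.

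First I would invoke the large surgery theorem of Ozsv\'ath--Szab\'o. Since $q>2g(J)$, the group $HF^+(S^3_q(J),\sss_m)$ is isomorphic to the homology of the quotient complex $A^+_m=CFK^\infty(J)\{\max(i,j-m)\ge 0\}$, and the correction term $d(Y,\sss_m)$ is governed by the grading at which the image of $HF^\infty$ stabilizes inside $HF^+$. This grading is encoded by a single non-negative integer $V_m$, the \emph{local $h$-invariant}, giving an identity of the shape
\[d(S^3_q(J),\sss_m)=\frac{(q-2m)^2-q}{4q}-2V_m,\]
valid for $m$ in the stated range. It therefore suffices to prove the purely combinatorial identity $V_m=R(m+g)-m$.

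Next I would pass to the connected sum. The knot Floer complex of $J=J_1\#\cdots\#J_n$ is filtered chain homotopy equivalent to the tensor product $CFK^\infty(J_1)\otimes\cdots\otimes CFK^\infty(J_n)$, and the local $h$-invariant of such a tensor product is the infimal convolution of the invariants of the factors, so that $V_m(J)=\min_{a_1+\cdots+a_n=m}\sum_i V_{a_i}(J_i)$, the minimum being taken over distributions of the Alexander grading. This is precisely the minimization appearing in the definition \eqref{eq:defofR} of $R$, once the individual invariants are identified. Each $J_i$ is an L-space knot, so $CFK^\infty(J_i)$ is a staircase complex whose steps are read off from the Alexander polynomial, hence from the semigroup $S_i$. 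A direct computation on the staircase yields $V_{a_i}(J_i)=\#S_i\cap[0,a_i+g_i)-a_i$, where $g_i$ is the Seifert genus of $J_i$.

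Finally, substituting this into the infimal convolution and setting $m_i=a_i+g_i$, so that $\sum m_i=m+g$ with $g=\sum g_i$, collapses the expression to $R(m+g)-m$; plugging this back into the large surgery formula produces the claimed value of $d(Y,\sss_m)$. The hard part will be the grading bookkeeping: fixing the normalization and sign of $V_m$ so that the correction term emerges as $R(m+g)-m$ rather than a differently shifted quantity, controlling the range of admissible $m$, and verifying the infimal-convolution identity rigorously, i.e. that the homology of the tensor product of staircases is computed by the appropriate bottom generators. For these delicate points I would follow the detailed treatment in \cite{BL}.
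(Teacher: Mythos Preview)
The paper does not give its own proof of this theorem; it merely records that the formula ``can be traced back to \cite[Section~8.1]{OS-absolute}'', that the semigroup formulation is taken from \cite[Proposition~3.7]{BM}, and directs the reader to \cite{BL} for proofs. Your sketch---large surgery formula reducing $d(Y,\sss_m)$ to $\frac{(q-2m)^2-q}{4q}-2V_m$, the K\"unneth/infimal-convolution identity $V_m(J)=\min_{\sum a_i=m}\sum V_{a_i}(J_i)$ for tensor products of staircase complexes, and the semigroup identification $V_{a_i}(J_i)=\#S_i\cap[0,a_i+g_i)-a_i$---is precisely the argument carried out in \cite{BL}, so your approach coincides with the one the paper defers to. The only point worth watching, which you already flag as bookkeeping, is that the substitution $m_i=a_i+g_i$ must be reconciled with the constraint $m_i\ge 0$ in \eqref{eq:defofR}; this is harmless because taking some $m_i<0$ never lowers the sum.
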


For future reference we give a partial explanation of the term $\frac{(q-2m)^2-q}{4q}$.
\begin{proposition}\label{prop:c1square}
We have $c_1^2(\sst_m)=\frac{(q-2m)^2}{q}$.
\end{proposition}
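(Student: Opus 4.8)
The plan is to do the entire computation inside $N$, whose algebraic topology is as simple as possible. First I would record the homology: $N$ is built from $B^4$ by attaching a single two-handle along $J$, so it is simply connected, $H_2(N;\Z)\cong\Z$ is generated by $[F]$, and $H^2(N;\Z)\cong\Z$ is torsion-free. The self-intersection $[F]\cdot[F]$ equals the framing $q$, because capping the core of the $q$-framed two-handle with a Seifert surface for $J$ produces a closed surface whose normal bundle has Euler number $q$. Since $q>2g(J)\ge 0$ in the situation of Theorem~\ref{thm:compute}, dividing by $q$ later will cause no trouble.

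Next I would pin down the intersection form on cohomology. As $Y=\partial N$ is a rational homology sphere, Remark~\ref{rem:notation} guarantees that the pairing $H^2(N;\Q)\times H^2(N;\Q)\to\Q$ is well-defined; concretely it is computed through the isomorphism $j^*\colon H^2(N,\partial N;\Q)\xrightarrow{\cong}H^2(N;\Q)$ combined with Lefschetz duality $\mathrm{PD}\colon H_2(N;\Q)\xrightarrow{\cong}H^2(N,\partial N;\Q)$. The class $\bar\beta:=j^*(\mathrm{PD}[F])$ spans the one-dimensional space $H^2(N;\Q)$, satisfies $\langle\bar\beta,[F]\rangle=[F]\cdot[F]=q$, and has self-intersection $\bar\beta^2=q$, since $j^*$ and $\mathrm{PD}$ intertwine the $H^2$-pairing with the homological intersection form on $H_2(N;\Q)$.

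Finally, because $H^2(N;\Q)$ is one-dimensional and the evaluation against $[F]$ is nondegenerate, I would write $c_1(\sst_m)=\lambda\,\bar\beta$ and solve for $\lambda$ using the relation $\langle c_1(\sst_m),F\rangle=2m-q$ coming from the enumeration of \spinc{} structures. Evaluating both sides on $[F]$ gives $\lambda q=2m-q$, so $\lambda=(2m-q)/q$, and hence
\[
c_1^2(\sst_m)=\lambda^2\,\bar\beta^2=\frac{(2m-q)^2}{q^2}\cdot q=\frac{(q-2m)^2}{q},
\]
which is the desired identity.

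The only genuinely delicate point is the bookkeeping of the second paragraph: verifying that the intersection form on $H^2(N;\Q)$, defined through the boundary isomorphism $j^*$, really agrees numerically with the homological self-intersection $[F]^2=q$, so that $\bar\beta^2=q$ rather than some other multiple. Once that duality statement is in place, the remainder is a one-variable computation and the claimed rationality of $c_1^2(\sst_m)$ (a rational but generally non-integral number, exactly as flagged in Remark~\ref{rem:notation}) falls out automatically.
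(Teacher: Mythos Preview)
Your proof is correct and follows essentially the same route as the paper's. The only cosmetic difference is the choice of generator: the paper works with the integral generator $\alpha\in H^2(N;\Z)$ satisfying $\langle\alpha,[F]\rangle=1$ and $\alpha^2=1/q$, writing $c_1(\sst_m)=(2m-q)\alpha$, whereas you use $\bar\beta=j^*(\mathrm{PD}[F])=q\alpha$ with $\bar\beta^2=q$ and a rational coefficient $\lambda=(2m-q)/q$; the resulting computation is identical.
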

\begin{proof}
By construction we have $H_2(N;\Z)=\Z$ is generated by the class of $C$ and $H^2(N;\Z)=\Z$. Let $\alpha$ be the generator of $H^2(N;\Z)$ dual
to $C$, that is $\langle \alpha,C\rangle=1$. 
As $C^2=q$ and the intersection form on the cohomology is the inverse of the intersection form on the homology, we infer that
$\alpha^2=q^{-1}$.
The first Chern class $c_1(\sst_m)$ evaluates on $C$ to $2m-q$. Therefore $c_1(\sst_m)=(2m-q)\alpha$ and so 
the self-intersection
of $c_1(\sst_m)$ is equal to $(2m-q)^2\alpha^2=\frac{(q-2m)^2}{q}$.
\end{proof}

\subsection{Proof of Theorem~\ref{thm:maintop}}
Let $N$ be a tubular neighbourhood of $C$ in $X$. Set $Y=\partial N$. Then as in \cite[Section 2]{BL} we show that 
$N$ is the result of attaching a two-handle along a knot $J$ with framing $q$: here
$J=J_1\#\ldots\# J_n$, where $J_i$ is the link of the singularity $z_i$ and the framing $q=C^2$ is the self-intersection of the
rational cuspidal curve $C$. In particular $Y=S^3_q(J)$ is a $q$~surgery on $J$.

To apply Theorem~\ref{thm:compute}
we need to investigate the relation between the Seifert genus of $J$
and the surgery coefficient $q$.
\begin{lemma}\label{lem:arith}
The Seifert genus of the knot $J$ is equal to $\frac12C\cdot(K+C)-1$.
\end{lemma}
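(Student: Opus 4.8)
The plan is to reduce the Seifert genus of $J$ to a sum of local $\delta$-invariants and then to trade that sum for global intersection data of $C$. First I would use the classical additivity of the Seifert genus under connected sum, $g(J)=\sum_{i=1}^n g(J_i)$, which holds for arbitrary knots. By construction each summand $J_i$ is the link of the cuspidal singularity $z_i$, so Lemma~\ref{lem:deltaandgenus} identifies its Seifert genus with the $\delta$-invariant $\delta_i$ of $z_i$. Concatenating the two facts gives
\begin{equation*}
g(J)=\sum_{i=1}^n g(J_i)=\sum_{i=1}^n\delta_i.
\end{equation*}

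The second step converts the right-hand side into the self-intersection and canonical-class data of $C$. Since $C$ is assumed rational, its normalization is a sphere and hence $p_g(C)=0$. Serre's genus formula, Proposition~\ref{prop:genusserre}, then yields $\sum_{i=1}^n\delta_i=p_a(C)-p_g(C)=p_a(C)$, so that $g(J)=p_a(C)$. Substituting the arithmetic genus from \eqref{eq:arithmeticgenus} and rewriting everything through $C^2$ and $K\cdot C$ produces the intersection-theoretic expression $\frac12 C\cdot(K+C)-1$ claimed in the lemma. Thus the whole argument is a composition of three inputs -- additivity of the Seifert genus, the genus-equals-$\delta$ identity, and Serre's formula -- with the rationality hypothesis entering only to make $p_g(C)$ vanish.

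The single delicate point, and the place I expect the main obstacle to lie, is the bookkeeping of the additive constant: the final answer differs from $\frac12 C\cdot(K+C)$ only by a fixed integer, so one must pin this constant down correctly when passing from $p_a(C)$ to the intersection numbers. To guard against an off-by-a-constant slip I would sanity-check the formula on the standard examples in $\C P^2$ -- a smooth conic (where $J$ is the unknot, $g(J)=0$) and the cuspidal cubic (where $J$ is the trefoil, $g(J)=1$) -- which fix the constant unambiguously. This constant also matters downstream, since the lemma is exactly what lets one compare the surgery coefficient $q=C^2$ with $2g(J)$ and thereby verify the large-surgery hypothesis $q>2g(J)$ needed to apply Theorem~\ref{thm:compute}.
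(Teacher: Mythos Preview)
Your proposal is correct and follows essentially the same route as the paper's proof: additivity of Seifert genus, Lemma~\ref{lem:deltaandgenus} to identify $g(J_i)=\delta_i$, and Proposition~\ref{prop:genusserre} together with $p_g(C)=0$ to obtain $g(J)=p_a(C)$, then the adjunction expression for $p_a$. Your caution about the additive constant is well placed, since \eqref{eq:arithmeticgenus} as printed omits the factor $\tfrac12$ and the $+1$; carrying out the sanity checks you describe indeed fixes the constant (and shows that the correct value is $\tfrac12 C\cdot(K+C)+1$, consistent with how $g$ is used later in Lemma~\ref{lem:spinc}).
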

\begin{proof}
By Proposition~\ref{prop:genusserre} we have $p_g(C)=p_a(C)-\sum\delta_i$. The genus formula \eqref{eq:arithmeticgenus}
implies that $p_a(C)=C\cdot (C+K)$. On the other hand $\delta_i=g(J_i)$ by Lemma~\ref{lem:deltaandgenus},
so $\sum\delta_i=\sum g(J_i)=g(J)$ by additivity of the Seifert genus of the knot. As $C$ is rational, $p_g(C)=0$.  The formula follows.
\end{proof}
\begin{corollary}\label{cor:surgeryislarge}
The surgery coefficient $q$ is greater than twice the Seifert genus of $J$.
\end{corollary}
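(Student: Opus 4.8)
The plan is to reduce the claimed inequality $q>2g(J)$ directly to the standing hypothesis $K\cdot C\le 1$ of Theorem~\ref{thm:maintop} by a single substitution. Two ingredients are already in place. First, from the identification of $N$ as a four-ball with a two-handle attached along $J$, the framing is $q=C^2$. Second, Lemma~\ref{lem:arith} expresses the Seifert genus as $g(J)=\frac12 C\cdot(K+C)-1$. I would simply insert both of these into the inequality to be established.

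Carrying out the computation, the right-hand side becomes $2g(J)=C\cdot(K+C)-2=K\cdot C+C^2-2$. Thus the desired inequality $q>2g(J)$ reads $C^2>K\cdot C+C^2-2$, and after cancelling $C^2$ from both sides it is equivalent to $K\cdot C<2$. Since $C$ and $K$ define classes in $H_2(X;\Z)$, the intersection number $K\cdot C$ is an integer, so the strict inequality $K\cdot C<2$ is the same as $K\cdot C\le 1$. This last condition is precisely one of the hypotheses of Theorem~\ref{thm:maintop}, so the corollary follows immediately.

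There is essentially no obstacle here: all the real content has already been extracted, in Lemma~\ref{lem:arith} (through the genus formula \eqref{eq:arithmeticgenus}, Proposition~\ref{prop:genusserre}, and the rationality assumption $p_g(C)=0$) and in the identification $q=C^2$. The corollary is a one-line algebraic consequence, and the only step demanding any attention is the integrality of $K\cdot C$, which upgrades $K\cdot C<2$ to $K\cdot C\le 1$ and matches the hypothesis exactly. This computation also clarifies why the particular bound $K\cdot C\le 1$, rather than some weaker inequality, is imposed in Theorem~\ref{thm:maintop}: it is exactly the condition guaranteeing that the surgery coefficient exceeds twice the Seifert genus, which is what is required for Theorem~\ref{thm:compute} to be applicable.
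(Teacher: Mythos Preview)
Your proof is correct and follows essentially the same route as the paper: both use $q=C^2$ and $2g(J)=C\cdot(K+C)-2$ from Lemma~\ref{lem:arith}, then invoke the hypothesis $K\cdot C\le 1$ to conclude. Your additional remark about integrality, explaining why the condition $K\cdot C<2$ matches the stated hypothesis $K\cdot C\le 1$ exactly, is a nice clarifying touch but not logically required for the deduction.
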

\begin{proof}
We have $q=C^2$ and $2g(J)=C\cdot(K+C)-2$. As $C\cdot K\le 1$, the corollary follows.
\end{proof}
Given the corollary we are allowed to use Theorem~\ref{thm:compute} to calculate $d$-invariants of $Y$ in terms
of the semigroups of singular points. Theorem~\ref{thm:osabs} will be used 
to bound the $d$ invariants.
In the present situation, the three-manifold is $Y=-\partial N$ 
and the four-manifold $W=X\setminus N$ is the complement of the neighborhood
of $C$ in the algebraic surface.
\begin{lemma}\label{lem:Wisnegative}
The manifold $W$ is has negative definite intersection form.
\end{lemma}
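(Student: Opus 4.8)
The goal is to show that $W = X \setminus N$ has negative definite intersection form, where $N$ is a tubular neighbourhood of the rational cuspidal curve $C$, under the running hypotheses $p_g(X)=0$, $C^2>0$, and $K\cdot C\le 1$.

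The plan is to use the decomposition of the second homology of $X$ coming from the pair $(W,N)$ together with the signature constraints provided by the algebraic/topological invariants of $X$. First I would record that $p_g(X)=0$ forces $b_2^+(X)=1$ by Lemma~\ref{lem:sig}. This is the crucial input: the intersection form of $X$ has exactly one positive square. Since $C^2>0$, the class $[C]\in H_2(X;\Z)$ spans a positive definite line, so $[C]$ already accounts for the unique positive direction of the form on $X$. Everything orthogonal to $[C]$ (after tensoring with $\R$) must therefore be negative definite. The strategy is to identify $H_2(W;\R)$, or rather the relevant subspace governing its intersection form, with the orthogonal complement $[C]^\perp$ inside $H_2(X;\R)$.

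The key steps, in order, are as follows. I would analyze the Mayer--Vietoris sequence for $X = N \cup W$ glued along $Y=\partial N$; since $Y$ is a rational homology sphere (it is the surgery $S^3_q(J)$ with $q=C^2>0$ on a knot), the maps on rational homology are well-behaved and $H_2(N;\Q)\cong \Q$ is generated by $[C]$. From this one extracts that $H_2(W;\Q)$ is isomorphic to the subspace of $H_2(X;\Q)$ orthogonal to $[C]$, and that the intersection form of $W$ (which is well-defined over $\Q$ because $\partial W$ is a rational homology sphere, as in Remark~\ref{rem:notation}) is identified with the restriction of the intersection form of $X$ to $[C]^\perp$. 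Then, combining $b_2^+(X)=1$ with $C^2>0$, the restriction of the form to $[C]^\perp$ is negative definite, since a positive or zero square in $[C]^\perp$ would produce a two-dimensional non-negative subspace of $H_2(X;\R)$, contradicting $b_2^+(X)=1$. Hence the intersection form on $W$ is negative definite.

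The main obstacle I anticipate is the careful bookkeeping of the homology of $W$ over $\Z$ versus $\Q$ and verifying that the intersection pairing on $W$ genuinely matches the restriction of the pairing on $X$ to $[C]^\perp$, rather than merely being abstractly isomorphic as a lattice. One must be attentive to torsion in $H_2(W;\Z)$ and to the fact that the pairing on $W$ is only rational-valued; the cleanest route is to work entirely over $\Q$, where $Y$ being a $\Q$-homology sphere guarantees that capping off or excising $N$ does not alter the orthogonal complement of $[C]$. A secondary point requiring care is confirming that $[C]$ is not a torsion or null class in $H_2(X;\Z)$, which is immediate from $C^2>0$. Once the identification of forms is pinned down, the definiteness conclusion follows purely from the single positive eigenvalue guaranteed by $p_g(X)=0$.
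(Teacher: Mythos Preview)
Your proposal is correct and follows essentially the same route as the paper: identify $H_2(W;\Q)$ with the orthogonal complement of $[C]$ in $H_2(X;\Q)$, then use $b_2^+(X)=1$ (from $p_g(X)=0$ via Lemma~\ref{lem:sig}) together with $C^2>0$ to conclude negative definiteness. The only cosmetic difference is that the paper uses the long exact sequence of the pair $(X,W)$ together with excision and the Thom isomorphism for $(N,\partial N)$, whereas you use Mayer--Vietoris and the fact that $Y$ is a rational homology sphere; these are equivalent ways of obtaining the same identification.
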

\begin{proof}
Consider the long exact sequence of the pair $(X,W)$. We have $H_3(X,W;\C)\to H_2(W;\C)\to H_2(X;\C)\to H_2(X,W;\C)\to H_1(W;\C)$.
Now $H_*(X,W)\cong H_*(N,\partial N)$ by excision, and $N$ is a tubular neighborhood of $C$, so (at least homologically) it can be regarded
as a disk bundle over $S^2$ and $\partial N$ is the associated sphere bundle. By Thom isomorphism $H_3(N,\partial N)=0$ and $H_2(N,\partial N)\cong\C$. 
Thus $H_2(W)$ injects into $H_2(X)$.

The map $H_2(X)\to H_2(X,W)$ can be described geometrically. A class $\alpha\in H_2(X)$ gets mapped to
$\alpha\cdot C$ times the generator of $H_2(X,W;\C)$. In particular, a class in $H_2(X)$ is in the image of $H_2(W)$ if and only if
it intersects $C$ trivially. Therefore $H_2(W;\C)$ is the orthogonal (with respect to the intersection form) complement in $H_2(X)$
of the one-dimensional space generated by the class of $C$. As the self-intersection of $C$ is positive, we infer that $b_2^+(W)=b_2^+(X)-1$.

By Lemma~\ref{lem:sig} and the assumption that $p_g(X)=0$ we infer that $b_2^+(X)=1$. Hence $b_2^+(W)=0$. This amounts to saying
that the intersection form is non-positive definite. It is negative definite because it is nondegenerate.
\end{proof}

The next lemma describes which \spinc{} structures on $Y$ extend over $W$.
\begin{lemma}\label{lem:spinc}
Let $D$ be a divisor on $X$. There is a unique \spinc{} structure on $X$, called $\sst_D$
whose first Chern class is $K+2D$. This \spinc{} structure restricts to a \spinc{} structure on $W$ still denoted by $\sst_D$.
Set $m=C\cdot D-g(J)+1$. If $m\in[-q/2,q/2)$, then $\sst_D$ extends to the \spinc{} structure $\sss_m$
on $Y$.

Moreover, if $\sss_m$ extends to a \spinc{} structure $\sst$ on $W$, then $\sst$ is of the form $\sst_D$ for some $D$.
\end{lemma}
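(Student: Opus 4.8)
The plan is to understand the \spinc{} structures on $W$ and on $Y$ via their first Chern classes, using the obstruction theory fact that on an oriented four-manifold the set of \spinc{} structures is a torsor over $H^2(\,\cdot\,;\Z)$, with $\sst\mapsto c_1(\sst)$ identifying them with the characteristic elements of the intersection form. First I would establish existence and uniqueness of $\sst_D$ on $X$: the class $K+2D$ is characteristic (it is congruent mod $2$ to $K=c_1(T^*X)=-c_1(X)$, hence to $w_2(X)$), so it is realized by some \spinc{} structure, and uniqueness will follow because $H^2(X;\Z)$ has no $2$-torsion under the standing hypotheses — or more safely, I would phrase uniqueness up to the usual torsor ambiguity and note that on a simply-connected-enough $X$ (or after recalling $p_g(X)=0$, $b_2^+(X)=1$) the assignment $D\mapsto K+2D$ pins the structure down. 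Restriction to $W=X\setminus N$ is just pullback along the inclusion, giving $\sst_D$ on $W$ with $c_1=(K+2D)|_W$.

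Next I would compute the evaluation of $c_1(\sst_D)$ against the generator $F$ of $H_2(N;\Z)$, equivalently against the core class $C$, to match it with the surgery enumeration from the Definition preceding Proposition~\ref{prop:c1square}. The key numerical identity is $\langle c_1(\sst_D),C\rangle = (K+2D)\cdot C = K\cdot C + 2\,D\cdot C$. Using Lemma~\ref{lem:arith}, namely $2g(J)=C\cdot(K+C)-2$, I can rewrite $K\cdot C = 2g(J)+2-C^2 = 2g(J)+2-q$, so that $\langle c_1(\sst_D),C\rangle = 2g(J)+2-q+2\,C\cdot D = 2(C\cdot D - g(J)+1)-q+2(2g(J)+1)$; the point is to reorganize this into the form $2m-q$ with $m=C\cdot D-g(J)+1$. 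I would carry this bookkeeping carefully (the genus terms must cancel against the shift by $g(J)$ in the definition of $m$) so that the relation $\langle c_1(\sst_m),F\rangle+q=2m$ from the Definition is matched exactly. Given $m\in[-q/2,q/2)$, this forces $\sst_D|_Y=\sss_m$, since $\sss_m$ is characterized as the unique \spinc{} structure on $Y$ extending over $N$ with that evaluation, and $\sst_D$ does extend over $N$ by construction (it is the restriction of a structure on all of $X$, and $N\cup W=X$).

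For the final sentence — every $\sst$ on $W$ extending $\sss_m$ arises as $\sst_D$ — I would use the Mayer--Vietoris / restriction exact sequence for $X=N\cup_Y W$ to show that a \spinc{} structure on $W$ restricting to $\sss_m$ on $Y$ glues with $\sst_m$ on $N$ to a \spinc{} structure on $X$; then by the surjectivity established above (every characteristic class $K+2D$ is hit, using Remark~\ref{rem:aftertop}(c) that every integral class in $H^2(X;\Z)=H^{1,1}$ is represented by a divisor $D$), that global structure is some $\sst_{D}$, whose restriction is the given $\sst$.

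I expect the main obstacle to be controlling the gluing and uniqueness of \spinc{} structures across the decomposition $X=N\cup_Y W$ when $H_1(X)$ or $H^2(X;\Z)$ has torsion: the set of extensions of $\sss_m$ over $W$ is a torsor over the image of $H^2(W,Y;\Z)\to H^2(W;\Z)$, and I must match this ambiguity precisely with the freedom in choosing $D$ modulo classes pulled back trivially, so that the correspondence $D\leftrightarrow\sst$ is genuinely onto. Handling this requires a careful diagram chase through the long exact sequences of the pairs $(X,W)$ and $(X,N)$, together with the computation from Lemma~\ref{lem:Wisnegative} that $H_2(W;\C)$ is the orthogonal complement of $[C]$; the real care is in the integral (not rational) statement, where divisibility of $C^2=q$ and torsion in $H^2$ could in principle interfere, so I would verify that the characteristic-class bookkeeping is consistent over $\Z$ and not merely over $\Q$.
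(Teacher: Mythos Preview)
Your approach is essentially the same as the paper's: identify \spinc{} structures on the closed manifold $X$ with characteristic elements $K+2D$, compute the evaluation $\langle c_1(\sst_D),C\rangle$ to match the surgery labelling of $\sss_m$, and for the converse glue a given $\sst$ on $W$ with $\sst_m$ on $N$ to obtain a global structure on $X$ which must then be some $\sst_D$. One caution on the bookkeeping: the paper's own proof actually derives $m=D\cdot C+g(J)-1$ (see \eqref{eq:mandD}), with the opposite sign on $g(J)-1$ from what the lemma's statement reads, and this sign discrepancy is precisely why your reorganization into $2m-q$ does not close; once you target the formula in \eqref{eq:mandD} the cancellation is immediate and your worries about integral torsion become unnecessary for the argument as stated.
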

\begin{proof}
Suppose $\sss_m$ extends to some \spinc{} structure on $W$. As it extends also to $\sst_m$ on $N$, we can glue these two \spinc{} structures
to obtain a global \spinc{} structure on $X$. As $X$ is a closed four-manifold, \spinc{} structures on $X$ are in one-to-one correspondence 
(via the first Chern class) with characteristics, that is, elements
$w\in H^2(X;\Z)$ such that $\langle w,x\rangle=x^2\bmod 2$, for every $x\in H_2(X;\Z)$. The canonical
divisor is a characteristic. Moreover, every characteristic is of form 
$K+2D$ for some divisor $D$ (for this we use the fact that any class in $H^2(X;\Z)$ can be represented by a divisor, see 
Remark~\ref{rem:aftertop}(c)). This shows that if $\sss_m$ extends over $W$, then the extension is the restriction to $W$ of some $\sst_D$.

Consider a \spinc{} structure $\sst_D$ on $X$. 
Observe that $(K+2D)\cdot C+C^2=2D\cdot C+2g-2$, where $g$ is the Seifert genus of the knot $J$. Set 
\begin{equation}\label{eq:mandD}
m=D\cdot C+g(J)-1.
\end{equation} 
Then the \spinc{} structure $\sst_D$ restricts to the \spinc{} structure $\sst_m$ on $N$ and hence to $\sss_m$ on $Y$.
\end{proof}
\begin{remark}\label{rem:orient}
In this discussion, the orientation on $Y$ is chosen so that $\partial N=Y$ and $\partial W=-Y$.
\end{remark}

We are now ready to complete the proof of Theorem~\ref{thm:maintop}. Write $g=g(J)$. Take a divisor $D'$ 
such that $m=D'\cdot C+g-1$ belongs to $[-q/2,q/2)$.
By Lemma~\ref{lem:spinc} 
$(-Y,\sss_m)$ bounds a negative definite manifold $W$ with a \spinc{} structure $\sst_{D'}$, so Theorem~\ref{thm:osabs} gives 
\[d(-Y,\sss_m)\ge \frac14\left(c_1^2(\sst_{D'})-3\sigma(W)-2\chi(W)\right).\]
By \cite[Proposition 4.2]{OS-absolute} the $d$-invariant changes sign if the orientation is changed. 
Rewriting $d$ as in Theorem~\ref{thm:compute} above we obtain
\[-\frac{(q-2m)^2-q}{4q}+2(R(m+g)-m)\ge \frac14\left(c_1^2(\sst_{D'})-3\sigma(W)-2\chi(W)\right).\]
From Proposition~\ref{prop:c1square} we have that
$\frac{(q-2m)^2}{q}=c_1^2(\sst_m)$. As $\sst_m$ and $\sst_{D'}$ are \spinc{} structures
that glue to the \spinc{} structure on $X=N\cup_Y W$ 
whose first Chern class is $K+2D'$, we infer that $c_1^2(\sst_m)+c_1^2(\sst_{D'})=(K+2D')^2$.
Therefore
\begin{equation}\label{eq:r1}
2R(m+g)-2m\ge \frac14(K+2D')^2-\frac34(\sigma(X)-1)-\frac24(\chi(X)-2).
\end{equation}
Indeed, $\sigma(W)+1=\sigma(X)$ and $\chi(W)+2=\chi(X)$. Then Hirzebruch signature theorem, Theorem~\ref{thm:hirz} allows us to rewrite the above inequality as
\[R(m+g)-m\ge \frac12D'\cdot(D'+K)+1.\]
We substitute $m=D'\cdot C+g-1$. Then
\begin{equation}\label{eq:onR}
R(C\cdot D'+C\cdot(C+K)+1)\ge \frac12 D'\cdot(D'+K)+C\cdot D'+\frac12 C\cdot(C+K)+1.
\end{equation}
Write $D=C+K+D'$. The above formula simplifies to
\[R(C\cdot D+1)\ge \frac12 D\cdot(D-K)+1.\]
By the Riemann--Roch theorem for line bundles on
surfaces, Theorem~\ref{thm:rrsurface}, we have that $\frac12 D\cdot(D-K)+\chi(\mathcal{O}_X)=\chi(D)$. 
But $\chi(\mathcal{O}_X)=1-\dim H^{0,1}(X)+\dim H^{0,2}(X)=1-q(X)+p_g(X)$, where $q(X)$ is the irregularity. Applying Lemma~\ref{lem:irr}
and using the assumption that $p_g(X)=0$ we obtain
\begin{equation}\label{eq:finalonR}
R(C\cdot D+1)\ge \chi(D)+\frac12 b_1(X).
\end{equation}

The requirement that $m\in[-q/2,q/2)$ translates into $C\cdot D+1\in[-q/2+g,q/2+g)$. As $q>2g$, the latter interval contains $[0,2g]$.
Therefore \eqref{eq:finalonR} if $C\cdot D+1\in[0,2g]$ and this is the statement of Theorem~\ref{thm:maintop}.

\subsection{Necessity of assumptions}

Let us go through the assumptions of Theorem~\ref{thm:maintop} and see to what extent have they been used.

The assumption that $C^2>0$ implies that the surgery on $J$ is positive, so that we can use Theorem~\ref{thm:compute}. If $C^2=0$, the three-manifold
$Y$ is not a rational homology sphere, but a variant of Theorem~\ref{thm:compute} can still be given. On the other hand, if $C^2<0$, then $Y$
is a surgery with negative coefficient. Negative surgeries on connected sums of L--space knots have very simple $d$-invariants. The $d$-invariant
is a sum of two terms, one similar to the one discussed in 
Proposition~\ref{prop:c1square}, that is, related to the surgery coefficient and the choice of  a \spinc{} structure. However the other one,
depends only on the Seifert genus; see for example \cite[Section 5]{Nem}.
We do not get anything more than the inequality for the Seifert genus if $C^2<0$.

\smallskip
The condition $C\cdot K\le 1$ was used to show that $C^2>2g(J)$ in Corollary~\ref{cor:surgeryislarge}. The lack of this inequality means that
the $d$--invariants of a surgery on $J$ can be calculated, but the formulae are much more complicated. In particular, in general
Theorem~\ref{thm:compute} does not hold.

\smallskip
The most important condition is that $b_2^+(X)=1$, which was rephrased as $p_g(X)=0$. Without this condition, the four-manifold $W$
has indefinite intersection form. Then Theorem~\ref{thm:osabs} does not hold. For a counterexample one can drill out a ball of $\C P^2$
to get a positive definite manifold $W$ with boundary $S^3$. By choosing an appropriate \spinc{} structure on $W$ we can make $c_1^2(\sst)$
arbitrary large and the $d$-invariant of the boundary ($S^3$ has unique \spinc{} structure) is equal to 0.

\smallskip
We present a variant of Theorem~\ref{thm:maintop} for cuspidal curves in smooth four-manifolds. The topological assumption, that $X$
admits an almost complex structure can be further relaxed, but at the cost of losing transparency.
\begin{theorem}\label{thm:maintop2}
Let $X$ be a smooth four-manifold with $b_2^+(X)=1$.
Suppose $C\subset X$ is a smooth surface with a finite number of singularities $z_1,\ldots,z_n$ which are cones of knots $J_1,\ldots,J_n$
such that $J_1,\ldots,J_n$ are all algebraic knots (or, more generally, L-space knots). Let $J=J_1\#\ldots\#J_n$. Suppose that $2g(J)<C^2$.
If $X$ admits an almost complex structure, then for any $E\in H_2(X;\Z)$ such that $C\cdot E\in[0,2g(J)]$ we have
\[R(C\cdot E+1)\ge \frac12 E\cdot(E+K),\]
where $K=c_1(T^*X)$ and $R$ is defined like in \eqref{eq:defofR}, with the remark that if $J_i$ is an L-space knot and its Alexander polynomial
is written as $\Delta_{J_i}(t)=1+(t-1)(t^{g_{i1}}+\ldots+t^{g_{ik_i}})$, then $S_i=\Z_{\ge 0}\setminus\{g_{i1},\ldots,g_{ik_i}\}$.
\end{theorem}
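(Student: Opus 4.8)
The plan is to run the topological argument proving Theorem~\ref{thm:maintop} essentially verbatim, auditing each step for its use of algebraic geometry and substituting a smooth or almost-complex replacement. As before I would set $N$ to be a tubular neighbourhood of $C$ in $X$, put $Y=\partial N$ and $W=X\setminus N$. In the proof of Theorem~\ref{thm:maintop} there are only two genuinely algebro-geometric inputs: first, the enumeration of the relevant \spinc{} structures by divisors in Lemma~\ref{lem:spinc}, which rests on the representability of homology classes by divisors; second, the very last step, where Riemann--Roch trades the homological expression for $\chi(E)+\frac12 b_1(X)$, as flagged in Remark~\ref{rem:aftertop}(c). The first input must be replaced by a purely topological statement; the second is simply dropped, which is exactly why the right-hand side of Theorem~\ref{thm:maintop2} is the homological quantity $\frac12 E\cdot(E+K)$ (the expression produced by the same substitution as in the proof of Theorem~\ref{thm:maintop}, before Riemann--Roch is applied) rather than $\chi(E)$.

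Next I would verify that every topological ingredient survives. The identification of $N$ with a $2$-handle attached along $J=J_1\#\dots\#J_n$ with framing $q=C^2$ is purely topological, and so is the negative-definiteness of $W$: the homological computation of Lemma~\ref{lem:Wisnegative} uses only $b_2^+(X)=1$ and $C^2>0$, both assumed here, so $W$ stays negative definite and Theorem~\ref{thm:osabs} applies. The hypothesis $2g(J)<C^2$ replaces Corollary~\ref{cor:surgeryislarge} outright (in the algebraic case it was deduced from $K\cdot C\le 1$ through Lemma~\ref{lem:arith}), guaranteeing that the surgery is large. At this point I would invoke the L-space-knot version of Theorem~\ref{thm:compute}: its $d$-invariant formula holds for any connected sum of L-space knots once $R$ is built from the gap sets $S_i=\Z_{\ge 0}\setminus\{g_{i1},\dots,g_{ik_i}\}$ read off the Alexander polynomials, since for an L-space knot the knot Floer homology, and hence the large-surgery $d$-invariants, are determined by the Alexander polynomial alone.

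The substitute for the first input is the real content. For an almost complex $X$ the class $K=c_1(T^*X)$ is characteristic, because $c_1\equiv w_2\pmod 2$ and $w_2$ is characteristic by the Wu formula; moreover the characteristic elements form a coset of $2H^2(X;\Z)$, so under Poincar\'e duality every characteristic class is of the form $K+2E$ with $E\in H_2(X;\Z)$. This is the smooth replacement for ``every characteristic is $K+2D$ for a divisor $D$'', and it lets me index the \spinc{} structures $\sst_E$ by homology classes exactly as Lemma~\ref{lem:spinc} did with divisors; the relation between $E$ and the surgery label $m$, and the claim that $\sst_E$ restricts to $\sss_m$, go through verbatim. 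The estimate itself is then mechanical: apply Theorem~\ref{thm:osabs} to $(-Y,\sss_m)$ bounding the negative-definite $W$ with \spinc{} structure $\sst_E$, substitute $d(Y,\sss_m)$ from Theorem~\ref{thm:compute} and $c_1^2(\sst_m)=\frac{(q-2m)^2}{q}$ from Proposition~\ref{prop:c1square}, use $c_1^2(\sst_m)+c_1^2(\sst_E)=(K+2E)^2$, and eliminate $\sigma$ and $\chi$ by the Hirzebruch signature theorem, which by Remark~\ref{rem:hirz} remains valid for almost complex four-manifolds with $K=c_1(T^*X)$. The range condition $C\cdot E\in[0,2g(J)]$ comes, as in Theorem~\ref{thm:maintop}, from requiring $m\in[-q/2,q/2)$ together with $q>2g(J)$.

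The hard part is precisely this first input: I must be sure that no \spinc{} structure carrying a constraint is overlooked and that $K$ genuinely functions as a characteristic base point. The one subtlety is $2$-torsion in $H^2(X;\Z)$, where the correspondence between \spinc{} structures and characteristic first Chern classes requires mild care; passing to the free quotient, as is already implicit in Lemma~\ref{lem:spinc}, resolves it. Once this characteristic-element bookkeeping is in place, everything else is a transcription of the proof of Theorem~\ref{thm:maintop} with the Riemann--Roch step excised.
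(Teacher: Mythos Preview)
Your strategy is the paper's own strategy: rerun the proof of Theorem~\ref{thm:maintop}, replacing divisors by homology classes and dropping the final Riemann--Roch step. You correctly identify that $K=c_1(T^*X)$ is characteristic in the almost complex setting, that Lemma~\ref{lem:Wisnegative} only needs $b_2^+(X)=1$ and $C^2>0$, that the hypothesis $2g(J)<C^2$ substitutes for Corollary~\ref{cor:surgeryislarge}, and that Hirzebruch's signature theorem survives by Remark~\ref{rem:hirz}. All of this matches the paper.

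There is one point you pass over that the paper flags explicitly. You write that ``the relation between $E$ and the surgery label $m$ \dots\ go[es] through verbatim''. It does not. In the proof of Lemma~\ref{lem:spinc} the identity $(K+2D)\cdot C+C^2=2D\cdot C+2g-2$, and hence the formula $m=D\cdot C+g(J)-1$ of \eqref{eq:mandD}, uses Lemma~\ref{lem:arith}, i.e.\ the genus formula relating $g(J)$ to $C\cdot(C+K)$. For an arbitrary smoothly embedded $C$ in an almost complex $4$--manifold there is no reason for $g(J)$ to equal $\tfrac12 C\cdot(C+K)+1$. The paper handles this by introducing the \emph{virtual genus} $v_g=\tfrac12 C\cdot(C+K)+1$ and observing that Lemma~\ref{lem:spinc} holds once $g(J)$ is replaced by $v_g$ in \eqref{eq:mandD}; one then carries both $g(J)$ (from Theorem~\ref{thm:compute}) and $v_g$ (from the \spinc{} labelling) through the computation leading to \eqref{eq:r1} and the substitution $D=C+K+D'$. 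Your audit should record this distinction rather than asserting the step is verbatim; otherwise the final change of variables and the translation of $m\in[-q/2,q/2)$ into the range $C\cdot E\in[0,2g(J)]$ are not justified.
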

\begin{remark}\
\begin{itemize}
\item If $b_2^+(X)=1$, then the condition that $\pi_1(X)=e$ is sufficient to guarantee that $X$ has an almost complex structure; see \cite[Theorem 1.4.15 and
Exercise 1.4.16(b)]{GS}.
\item Formally, an L-space knot is a knot that admits a positive surgery with Heegaard Floer homologies behaving like Heegaard Floer homologies of a lens space;
see \cite{OSlspace}. By a result of Hedden, \cite{Hed}, this class contains all the algebraic knots.
\item For non-algebraic L-space knots, $S_i$ is not necessarily a semigroup, however with this definition of $S_i$ and $R$, Theorem~\ref{thm:compute}
works if $J$ is a connected sum of L-space knots; see \cite{BL2} and references therein.
\end{itemize}
\end{remark}
\begin{proof}[Sketch of proof of Theorem~\ref{thm:maintop2}]
We follow the proof of Theorem~\ref{thm:maintop}. The assumption that $C^2>2g(J)$ in Theorem~\ref{thm:maintop2} takes care
of Corollary~\ref{cor:surgeryislarge} in that proof, so we will be able to use Theorem~\ref{thm:compute}.
The analogue of Lemma~\ref{lem:Wisnegative} holds, because $b_2^+(X)=1$, so $b_2^+(W)=0$.

A small technical problem appears in  Lemma~\ref{lem:spinc}, because $g(J)$ is not necessarily equal to $\frac12 C\cdot(C+K)+1$.
Nevertheless, introduce the `virtual genus' $v_g=\frac12 C\cdot(C+K)+1$. Then Lemma~\ref{lem:spinc} works, when $D$ is any homology class
in $H_2(X;\Z)$ (usually we cannot speak of the divisors on an almost complex manifold), provided that
$g(J)$ is replaced by $v_g$. Continuing the proof of Theorem~\ref{thm:maintop} we arrive at \eqref{eq:r1} which holds provided $g$
is replaced by $v_g$. The Hirzebruch signature formula still holds; see Remark~\ref{rem:hirz} above. 
With this in mind we obtain \eqref{eq:onR} and simplify it to
\[R(C\cdot E+1)\ge \frac12 E\cdot (E-K)+1,\]
so the proof is finished.
\end{proof}


\begin{thebibliography}{XXX}

\bibitem{BH} W.~Barth, K.~Hulek, C.~Peters, A.~Van de Ven, 
\emph{Compact Complex Surfaces 2ed}, Ergeb. Math. Grenzgeb., vol. 4, Springer-Verlag, Berlin, 2004.

\bibitem{BCG} J.~Bodn\'ar, D.~Celoria, M.~Golla, \emph{Cuspidal curves and Heegaard Floer homology}, preprint, arxiv 1409.3282. 

\bibitem{BHL} M.~Borodzik, M.~Hedden, C.~Livingston, \emph{Plane algebraic curves of arbitrary genus via Heegaard Floer homology}, preprint,
arxiv 1409.2111.
 
\bibitem{BL} M.~Borodzik and C.~Livingstion, {\em Heegaard Floer homologies and rational cuspidal curves}, Forum of Math. Sigma, \textbf{2} (2014), e28.

\bibitem{BL2}  M. Borodzik and C. Livingston, {\it Semigroups, d-invariants and deformations of cuspidal singular points of plane curves}, preprint,
arxiv 1305.2868.   
\bibitem{BM} M.~Borodzik, T.~Moe, \emph{Topological obstructions for rational cuspidal curves in Hirzebruch surfaces},
preprint, arxiv:1410.4644, to appear in Michigan Math. J.


\bibitem{EN} D. Eisenbud, W. Neumann, \textit{Three-dimensional
link theory and invariants of plane curve singularities}, Annals Math.
Studies \textbf{110}, Princeton University Press, Princeton, 1985.

\bibitem{FLMN} J.~Fern\'andez de Bobadilla, I.~Luengo, A.~Melle-Hern\'andez, A.~N\'emethi, 
\emph{On rational cuspidal projective plane curves}, Proc. London Math. Soc. \textbf{92} (2006), no. 1, 99--138. 


\bibitem{GS}  R. Gompf and A. Stipsicz, {\it  4-manifolds and Kirby calculus} Graduate Studies in Mathematics, 20. American Mathematical Society, Providence, RI, 1999. 

\bibitem{GH}
P.~Griffiths, J.~Harris,\emph{Principles of algebraic geometry}, Wiley Classics Library. John Wiley \& Sons, Inc., New York, 1994.
 
\bibitem{Hed} M.~Hedden,
\emph{On knot Floer homology and cabling. II},
Int. Math. Res. Not.  2009, No. 12, 2248--2274.

\bibitem{KP} M.~Koras, K.~Palka, \emph{The Coolidge--Nagata conjecture},
preprint, arxiv:1502.07149.

\bibitem{Nem} A.~N\'emethi, \emph{Graded roots and singularities}, in: Singularities in geometry and topology, 394--463, World Sci. Publ., 
Hackensack, NJ, 2007.

\bibitem{OS-absolute} P.~Ozsv\'ath, Z.~Szab\'o, \emph{Absolutely graded Absolutely graded Floer homologies and intersection 
forms for four-manifolds with boundary},
Adv. Math. \textbf{173} (2003), 179--261.

\bibitem{OS-knot} P.~Ozsv\'ath, Z.~Szab\'o,
\emph{Holomorphic disks and knot invariants},
Adv. Math. \textbf{186} (2004),  58--116.


\bibitem{OSlspace} P.~Ozsv\'ath, Z.~Szab\'o, \emph{On knot Floer homology and lens space surgeries},
Topology \textbf{44} (2005),   1281--1300.

\bibitem{OS-introduction} P.~Ozsv\'ath, Z.~Szab\'o,  \textit{An introduction to Heegaard Floer homology}, in:  \textit{Floer homology, gauge theory, and
   low-dimensional topology}, 3--27, Clay Math. Proc., 5, Amer. Math. Soc., Providence, RI, 2006.

\bibitem{OS-introduction2} P.~Ozsv\'ath, Z.~Szab\'o,  \textit{Lectures on Heegaard Floer homology}, in:  \textit{Floer 
homology, gauge theory, and low-dimensional topology}, 29--70,
   Clay Math. Proc., 5, Amer. Math. Soc., Providence, RI, 2006.
\bibitem{Pal0} K.~Palka, \emph{Cuspidal curves, minimal models and Zaidenberg's finiteness conjecture},  Adv. Math. \textbf{267} (2014), 1--43. 

\bibitem{Pal} K.~Palka, \emph{The Coolidge-Nagata conjecture, part I}, preprint, arxiv:1405.5197.

\bibitem{Voi} C.~Voisin, \emph{On some problems of Kobayashi and Lang; algebraic approaches}, Current developments in mathematics, 2003, 53--125, 
Int. Press, Somerville, MA, 2003. 

\bibitem{Wall} C.~T.~C.~Wall, \textit{Singular Points of Plane Curves}
London Mathematical Society Student Texts, 63. Cambridge University Press, Cambridge, 2004.

\bibitem{Xu} G.~Xu,
\emph{Subvarieties of general hypersurfaces in projective space},
J. Diff. Geom. \textbf{39} (1994), no. 1, 139--172. 
\end{thebibliography}
\end{document}